\def\NZQ{\mathbb}               
\def\QQ{{\NZQ Q}}
\def\ZZ{{\NZQ Z}}
\def\RR{{\NZQ R}}
\def\KK{{\NZQ K}}
\def\frk{\mathfrak}               
\def\Phi{{\frk N}}
\def\ab{{\mathbf a}}
\def\eb{{\mathbf e}}
\def\tb{{\mathbf t}}
\def\xb{{\mathbf x}}
\def\opn#1#2{\def#1{\operatorname{#2}}} 
\opn\gr{gr}
\def\Gc{{\mathcal G}}
\def\Fc{{\mathcal F}}
\newtheorem{Theorem}{Theorem}[section]
\newtheorem{Lemma}[Theorem]{Lemma}
\newtheorem{Corollary}[Theorem]{Corollary}
\newtheorem{Proposition}[Theorem]{Proposition}
\theoremstyle{definition}
\newtheorem{Remark}[Theorem]{Remark}
\newtheorem{Example}[Theorem]{Example}
\newtheorem{Definition}[Theorem]{Definition}
\let\epsilon\varepsilon
\let\phi=\varphi
\let\kappa=\varkappa
\opn\dis{dis}
\opn\height{height}
\opn\dist{dist}
\def\pnt{{\raise0.5mm\hbox{\large\bf.}}}
\opn\Lex{Lex}
\opn\conv{conv}
\numberwithin{equation}{section}
\title{Examining Kempe equivalence via commutative algebra}
\author{Hidefumi Ohsugi and Akiyoshi Tsuchiya}
\address{Hidefumi Ohsugi,
	Department of Mathematical Sciences,
	School of Science,
	Kwansei Gakuin University,
	Sanda, Hyogo 669-1330, Japan} 
\email{ohsugi@kwansei.ac.jp}
\address{Akiyoshi Tsuchiya,
Department of Information Science,
Faculty of Science,
Toho University,
2-2-1 Miyama, Funabashi, Chiba 274-8510, Japan} 
\email{akiyoshi@is.sci.toho-u.ac.jp}
\keywords{vertex coloring, Kempe switching, Kempe equivalence, Gr\"{o}bner basis, Hilbert function}
\subjclass[2020]{05C15, 13P10, 13F65}
\begin{document}

\maketitle
\begin{abstract}
Kempe equivalence is a classical and important notion on vertex coloring in graph theory.
In the present paper, we introduce several ideals associated with graphs and provide a method for determining whether two $k$-colorings are Kempe equivalent via commutative algebra.
Moreover, we give a way to compute all $k$-colorings of a graph up to Kempe equivalence by virtue of the algebraic technique on Gr\"{o}bner bases.
As a consequence, the number of $k$-Kempe classes can be computed by using Hilbert functions. 
Finally,  
we introduce several algebraic algorithms related to Kempe equivalence.
\end{abstract}

\section{Introduction}

A {\em $k$-coloring} $f$ of a graph $G$ on the vertex set $[d]:=\{1,2,\ldots,d\}$ is a map from $[d]$ to $[k]$ such that
$f(i) \neq f(j)$ for all $\{i,j\} \in E(G)$.
The smallest integer $\chi(G)$ such that $G$ has a $\chi(G)$-coloring is called the \textit{chromatic number} of $G$.
Given a $k$-coloring $f$ of $G$, and integers $1 \le i < j \le k$,
let $H$ be a connected component of the induced subgraph of $G$ 
on the vertex set $f^{-1}(i) \cup f^{-1}(j)$.
Then we can obtain a new $k$-coloring $g$ of $G$ by setting
$$
g(x) =
\begin{cases}
    f(x) & x \notin H,\\
    i & x \in H \mbox{ and } f(x) =j,\\
    j & x \in H \mbox{ and } f(x) =i.\\
\end{cases}
$$
We say that $g$ is obtained from $f$ by a {\em Kempe switching}.
Two $k$-colorings $f$ and $g$ of $G$ are called {\em Kempe equivalent}, denoted by $f \sim_k g$, if there exists
a sequence $f_0,f_1,\ldots,f_s$ of $k$-colorings of $G$ such that $f_0=f$, $f_s=g$,
and $f_i$ is obtained from $f_{i-1}$ by a Kempe switching.
Let denote $\mathcal{C}_k(G)$ the set of all $k$-colorings of $G$. Then $\sim_k$ is an equivalence relation on $\mathcal{C}_k(G)$.
The equivalence classes of $\mathcal{C}_k(G)$ by $\sim_k$ are called the $k$-Kempe classes.
We denote ${\rm kc}(G,k)$ the quotient set $\mathcal{C}_k(G) / \sim_{k}$ and denote ${\rm Kc}(G,k)$ the number of $k$-Kempe classes of $G$, namely ${\rm Kc}(G,k)=|{\rm kc}(G,k)|$.
Kempe switchings were introduced by Kempe in the false proof of the $4$-Color Theorem. However, his idea is powerful in graph coloring theory. Recently, many researchers have studied Kempe switchings and Kempe equivalence.
See \cite{Moh} for an overview of Kempe equivalence.

Given a graph $G$, let $S(G)$ be the set of all stable sets of $G$.
The \textit{stable set ideal} $I_G$ of $G$ is a toric ideal arising from $S(G)$ of a polynomial ring $R[G]:=\KK[x_S \mid S \in S(G)]$ over a field $\KK$.
In \cite{OTkempe}, the authors 
showed that $I_G$ is generated by binomials $\xb_f-\xb_g$ associated with $k$-colorings $f$ and $g$ of a replication graph
of an induced subgraph of $G$, and
found a relationship between Kempe equivalence on $G$ and an algebraic property of $I_G$.
In particular, by using the proof of \cite[Theorem 1.3]{OTkempe}, we can examine if two $k$-colorings of $G$ are Kempe equivalent by using $I_G$. However, $I_G$ has too much information for this purpose.
In the present paper, we introduce a
simpler ideal $J_G$, which is generated by binomials $\xb_f-\xb_g$ associated with $2$-colorings $f$ and $g$ of an induced subgraph of $G$, to determine Kempe equivalence on $G$. We call $J_G$ the \textit{$2$-coloring ideal} of $G$.
Then our first main result is the following:
\begin{Theorem}
\label{thm:K_equiv}
    Let $G$ be a graph on $[d]$ and let $f,g$ be $k$-colorings of $G$. Then $f \sim_k g$ if and only if 
    $\xb_f-\xb_g \in J_G$.
\end{Theorem}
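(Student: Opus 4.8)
The plan is to prove both directions by translating Kempe switchings into manipulations with the defining binomials of $J_G$. Throughout I use that, for a $k$-coloring $h$ of $G$, its monomial $\xb_h=\prod_{i=1}^{k}x_{h^{-1}(i)}\in R[G]$ records the multiset of color classes of $h$ (the variable $x_\emptyset$ accounting for unused colors); in particular $\xb_h=\xb_{h'}$ holds exactly when $h'$ is a permutation of the colors of $h$, and any such permutation is realized by a sequence of Kempe switchings (transposing two colors amounts to flipping every connected component of the corresponding induced subgraph), so color permutations do not affect $\sim_k$.

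For the implication $f\sim_k g\Rightarrow\xb_f-\xb_g\in J_G$, fix a Kempe-equivalence sequence $f=f_0,\dots,f_r=g$; it suffices to show $\xb_{f_l}-\xb_{f_{l+1}}\in J_G$ for a single Kempe switching, say along a connected component of the (necessarily bipartite) induced subgraph $G[W]$ with $W:=f_l^{-1}(i)\cup f_l^{-1}(j)$. Such a switching fixes $f_l^{-1}(\ell)$ for $\ell\neq i,j$ and replaces the pair $(f_l^{-1}(i),f_l^{-1}(j))$ by a pair $(A,B)$ with $A\sqcup B=W$; since $f_{l+1}$ is again a proper coloring, $A$ and $B$ are stable, so $(f_l^{-1}(i),f_l^{-1}(j))$ and $(A,B)$ are two $2$-colorings of $G[W]$ and $x_{f_l^{-1}(i)}x_{f_l^{-1}(j)}-x_Ax_B$ is a defining binomial of $J_G$. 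Multiplying it by $\prod_{\ell\neq i,j}x_{f_l^{-1}(\ell)}$ gives $\xb_{f_l}-\xb_{f_{l+1}}\in J_G$, and summing over $l$ gives $\xb_f-\xb_g\in J_G$.

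For the converse I would invoke the standard description of binomial ideals: if an ideal $I$ of a polynomial ring is generated by differences of monomials $u_1-v_1,\dots,u_s-v_s$, then for monomials $u,v$ one has $u-v\in I$ if and only if $u$ and $v$ lie in the same component of the graph on all monomials in which $p\,u_t$ is joined to $p\,v_t$ for every monomial $p$ and index $t$. (Sketch: the $\KK$-span $V$ of the differences $m-m'$ of connected monomials is an ideal containing each $u_t-v_t$, while each such $m-m'$ telescopes into $I$, so $V=I$; and $V$ is the direct sum, over the components $\alpha$, of the subspaces spanned by the differences $m-m'$ with $m,m'$ in $\alpha$, each of which lies in the span of the monomials of $\alpha$ and consists of combinations with coefficient sum $0$, so $u-v\in V$ forces $u,v$ into one component.) Applying this with $I=J_G$, from $\xb_f-\xb_g\in J_G$ I obtain a chain of monomials $\xb_f=m_0,\dots,m_t=\xb_g$ in which each $m_{l+1}$ comes from $m_l$ by deleting a factor $x_Ax_B$ and inserting $x_{A'}x_{B'}$, where $(A,B)$ and $(A',B')$ are $2$-colorings of one induced subgraph $G[W]$ with $W=A\sqcup B=A'\sqcup B'$. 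Then I argue inductively that every $m_l$ equals $\xb_{h_l}$ for some $k$-coloring $h_l$ of $G$: this holds for $m_0$, and a step preserves it because $x_Ax_B\mid\xb_{h_l}$ forces $A,B$ to be color classes of $h_l$, say $A=S_a$, $B=S_b$ with $a\neq b$ (the degenerate case $A=B=\emptyset$ being a trivial step), so replacing $S_a,S_b$ by the stable sets $A',B'$ partitioning $S_a\cup S_b$ again yields a partition of $[d]$ into $k$ stable sets. Finally each step realizes $h_l\sim_k h_{l+1}$: the bipartitions $(S_a,S_b)$ and $(A',B')$ of the bipartite graph $G[W]$ differ only by exchanging the two sides on some set of connected components of $G[W]$; exchanging the sides on one component is exactly a Kempe switching of the current coloring on the color pair indexing $S_a,S_b$; carrying these out one at a time, and then permuting color labels if necessary, transforms $h_l$ into $h_{l+1}$. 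Concatenating over $l$ yields $f\sim_k g$.

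The step I expect to be the main obstacle is precisely this last translation of the monomial chain back into Kempe switchings: one must check that the chain supplied by the binomial lemma never leaves the set of genuine $k$-coloring monomials, and that a single monomial move — which a priori only interchanges two $2$-colorings of some induced subgraph — actually decomposes into legitimate Kempe switchings, which relies on the elementary fact that any two proper $2$-colorings of a bipartite graph differ by flips of connected components. The remaining points are routine bookkeeping: the role of $x_\emptyset$ when fewer than $k$ colors are used, and the reduction of everything to multisets of color classes noted at the outset.
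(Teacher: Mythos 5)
Your proof is correct and follows essentially the same route as the paper's (Theorem 5.1): the forward direction realizes each Kempe switching as a monomial multiple of a quadratic generator of $J_G$, and the converse uses the standard connectivity description of pure-difference binomial ideals to turn membership in $J_G$ into a chain of monomial moves, each of which is realized by flipping connected components of a bipartite induced subgraph. The only cosmetic differences are that the paper phrases the converse as a minimal-counterexample induction starting from the expression of \cite[Lemma 3.8]{HHO} and proves the slightly stronger statement that membership in the larger ideal $\langle [I_G]_2 \rangle$ already implies $f \sim_k g$.
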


Next, we compute all $k$-colorings of a graph $G$ up to Kempe equivalence by virtue of the algebraic technique on Gr\"{o}bner bases.
Namely, a complete representative system for ${\rm kc}(G,k)$ is given.
For this,
we introduce another ideal $K_G$ defined by
\[
K_G:= J_G + M_G,
\]
where
\[
M_G := \langle x_S x_T \mid S, T \in S(G), S \cap T \neq \emptyset\rangle.
\]
The ideal $K_G$ is called the \textit{Kempe ideal} of $G$.
Then our second main result is the following:
\begin{Theorem}
\label{thm:K-class}
        Let $G$ be a graph on $[d]$ and $<$ a monomial order on $R[G]$, and let $\{\xb_{f_1},\dots,\xb_{f_s}\}$ be the set of all standard monomials of degree $k$ with respect to the initial ideal ${\rm in}_<(K_G)$.
    Then 
\[
\{f_1, \dots, f_s\} \cap \mathcal{C}_k(G)
\]
is a complete representative system
    for ${\rm kc}(G,k)$.
\end{Theorem}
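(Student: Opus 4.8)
The plan is to derive the statement from Theorem~\ref{thm:K_equiv} by means of the fine $\ZZ^{d}$-grading of $R[G]$ in which $\deg x_{S}=\sum_{i\in S}\eb_{i}$. I would first record two observations. \emph{(i)} A monomial of degree $k$ lies outside $M_{G}$ exactly when it is a product of $k$ pairwise disjoint stable sets, and such a product has multidegree $\mathbf{1}=(1,\dots,1)$ if and only if those sets cover $[d]$, i.e.\ the monomial equals $\xb_{f}$ for some $f\in\mathcal{C}_{k}(G)$; since $M_{G}\subseteq K_{G}$, the degree-$k$ standard monomials indexed by elements of $\mathcal{C}_{k}(G)$ are precisely the degree-$k$ standard monomials of multidegree $\mathbf{1}$. \emph{(ii)} Every generator $x_{A}x_{B}$ of $M_{G}$ has a coordinate of its multidegree equal to $2$, so $M_{G}$ has no nonzero homogeneous component in any multidegree $\gamma\in\{0,1\}^{d}$; since $J_{G}$ and $M_{G}$ are $\ZZ^{d}$-graded, this gives $(K_{G})_{\gamma}=(J_{G})_{\gamma}$ for every such $\gamma$, and in particular $(K_{G})_{\mathbf{1}}=(J_{G})_{\mathbf{1}}$. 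I would also use that $J_{G}\subseteq I_{G}$ and that $I_{G}$, being a toric (hence prime) ideal, contains no monomial.

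Irredundancy is then quick: if $f_{i}\neq f_{j}$ both lie in $\mathcal{C}_{k}(G)$ with $f_{i}\sim_{k}f_{j}$, then Theorem~\ref{thm:K_equiv} gives $0\neq\xb_{f_{i}}-\xb_{f_{j}}\in J_{G}\subseteq K_{G}$, whose initial monomial is $\xb_{f_{i}}$ or $\xb_{f_{j}}$, contradicting that both are standard with respect to ${\rm in}_{<}(K_{G})$. For completeness, let $U\subseteq R[G]$ be the bigraded component of degree $k$ and multidegree $\mathbf{1}$; by \emph{(i)}, the monomials $\xb_{f}$ with $f\in\mathcal{C}_{k}(G)$ form a $\KK$-basis of $U$. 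Since $K_{G}$ and $J_{G}$ are bigraded (for total degree and multidegree), observation \emph{(ii)} yields $U\cap K_{G}=(K_{G})_{(k,\mathbf{1})}=(J_{G})_{(k,\mathbf{1})}=U\cap J_{G}$; and because $J_{G}$ is generated by the binomials $\xb_{f}-\xb_{g}$ attached to $2$-colorings of induced subgraphs, multiplying such binomials by monomials shows that $(J_{G})_{(k,\mathbf{1})}$ is spanned by those binomials $\xb_{f'}-\xb_{g'}$ with $f',g'\in\mathcal{C}_{k}(G)$ that lie in $J_{G}$---equivalently, by Theorem~\ref{thm:K_equiv}, with $f'\sim_{k}g'$. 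Hence $\dim_{\KK}U/(U\cap K_{G})$ equals the number of Kempe classes ${\rm Kc}(G,k)$, while $U/(U\cap K_{G})=(R[G]/K_{G})_{(k,\mathbf{1})}$ has as a $\KK$-basis the degree-$k$ standard monomials of multidegree $\mathbf{1}$, namely the $\xb_{f_{i}}$ with $f_{i}\in\mathcal{C}_{k}(G)$. Thus $|\{f_{1},\dots,f_{s}\}\cap\mathcal{C}_{k}(G)|={\rm Kc}(G,k)$, so, combined with irredundancy, the map $f\mapsto[f]$ from $\{f_{1},\dots,f_{s}\}\cap\mathcal{C}_{k}(G)$ to ${\rm kc}(G,k)$ is an injection between finite sets of equal cardinality, hence a bijection---which is precisely the assertion.

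The step I expect to be the main obstacle is controlling the interaction between the binomial ideal $J_{G}$ and the monomial ideal $M_{G}$ inside $K_{G}$: passing from $J_{G}$ to $K_{G}$ must not create new reductions among coloring-monomials that fail to be realized by Kempe switchings, and making observation \emph{(ii)} airtight---together with the primeness of $I_{G}\supseteq J_{G}$, which forbids stray monomials in multidegrees $\gamma\in\{0,1\}^{d}$---is what rules this out. If a constructive proof of completeness is preferred, the same identities show that the reduced Gr\"{o}bner basis of $K_{G}$ with respect to $<$ consists of monomials and binomials $u-v$; that a basis element whose leading term divides $\xb_{h}$ (for $h\in\mathcal{C}_{k}(G)$) cannot be a monomial, hence is a binomial lying in $(J_{G})_{\gamma}$ for some $\gamma\in\{0,1\}^{d}$; and therefore that reducing $\xb_{h}$ produces a strictly decreasing sequence of coloring-monomials, consecutive ones Kempe equivalent by Theorem~\ref{thm:K_equiv}, which terminates at a standard monomial $\xb_{f_{i}}$ with $f_{i}\sim_{k}h$.
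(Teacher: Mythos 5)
Your proposal is correct, but the completeness half takes a genuinely different route from the paper's. The paper proves the stronger Theorem \ref{crs}: it first establishes (Proposition \ref{kg GB}, via Buchberger's criterion) that the reduced Gr\"obner basis of $K_G$ is exactly $(\mathcal{G}_1\setminus M_G)\cup\{x_Sx_T\mid S\cap T\neq\emptyset\}$, where $\mathcal{G}_1$ is the reduced Gr\"obner basis of $J_G$, and then argues that the normal form of $\xb_f$ for any $k$-coloring $f$ is a standard monomial $\xb_{f_j}$ with $\xb_f-\xb_{f_j}\in J_G$, so $f\sim_k f_j$ by Theorem \ref{ExamKempe}. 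You instead exploit the fine $\ZZ^d$-grading: since every generator of $M_G$ has a coordinate of multidegree at least $2$ while $J_G$ and $M_G$ are multihomogeneous, $(K_G)_{(k,\mathbf{1})}=(J_G)_{(k,\mathbf{1})}$, and by Theorem \ref{thm:K_equiv} this component is spanned by the differences $\xb_{f'}-\xb_{g'}$ over Kempe-equivalent pairs in $\mathcal{C}_k(G)$; hence the number of standard monomials of bidegree $(k,\mathbf{1})$ --- which by your observation \emph{(i)} is $|\{f_1,\dots,f_s\}\cap\mathcal{C}_k(G)|$ --- equals $\dim_\KK(R[G]/K_G)_{(k,\mathbf{1})}={\rm Kc}(G,k)$, and the (identical-to-the-paper) irredundancy argument upgrades the injection to a bijection. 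All the steps check out: the multihomogeneity of $J_G$, the compatibility of standard monomials with the fine grading, and the description of $(J_G)_{(k,\mathbf{1})}$ as a span of monomial multiples of the quadratic generators are each sound. Your route is shorter in that it bypasses the Gr\"obner-basis structure theorem for $K_G$ entirely and yields the cardinality identity $|\{f_1,\dots,f_s\}\cap\mathcal{C}_k(G)|={\rm Kc}(G,k)$ directly; it also generalizes verbatim to any multidegree $\gamma\in\{0,1\}^d$, recovering Theorem \ref{crs} for all induced subgraphs and hence Corollary \ref{cor:K_class}. What the paper's argument buys in exchange is constructiveness: the explicit normal-form description of the representative is what drives the algorithms of Section \ref{sect:algo}, whereas your dimension count only certifies existence (your closing sketch of a constructive variant essentially re-derives the content of Proposition \ref{kg GB}).
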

As a consequence, the number of $k$-Kempe classes ${\rm Kc}(G,k)$ can be computed by Hilbert functions (Corollary \ref{cor:K_class}).

Finally,  
by using Theorems \ref{thm:K_equiv} and \ref{thm:K-class} and techniques on Gr\"{o}bner bases, we introduce several algorithms related to Kempe equivalence. Specifically, our algorithms perform:
\begin{enumerate}
    \item Determination of Kempe equivalence (Algorithm \ref{one});
    \item Computation of a complete representative system for ${\rm kc}(G,k)$ (Algorithm \ref{kanzen});
    \item Enumeration of $k$-colorings that are Kempe equivalent (Algorithm \ref{enumerate});
    \item Construction of a sequence of Kempe switchings between two Kempe equivalent $k$-colorings (Algorithm \ref{alg1}).
\end{enumerate}

The present paper is organized as follows: In Section \ref{sect:stable}, we will recall the definition of stable set ideals and explain a relationship between stable set ideals and Kempe equivalence.
Section \ref{sect:GB} will give a brief introduction to Gr\"{o}bner bases.
In Section \ref{sect:2-color}, we will define $2$-coloring ideals and see their algebraic properties.
In Section \ref{sec:kempe}, a proof of Theorem \ref{thm:K_equiv} will be given.
In Section \ref{sect:k-class}, we will define Kempe ideals and prove Theorem \ref{thm:K-class}.
Finally, in Section \ref{sect:algo}, we will introduce several algorithms related to Kempe equivalence.
\section{Stable set ideals}
\label{sect:stable}
In this section, we define stable set rings and explain a relationship between stable set ideals and Kempe equivalence.
Let $G$ be a graph on the vertex set $[d]$ with the edge set $E(G)$.
Given a subset $S \subset [d]$,
let $G[S]$ denote the induced subgraph of $G$ on the vertex set $S$.
A subset $S \subset [d]$ is called a {\em stable set} (or an {\em independent set}) of $G$
if $\{i,j\} \notin E(G)$ for all $i,j \in S$ with $i \neq j$.
Namely, a subset $S \subset [d]$ is stable if and only if $G[S]$ is an empty graph.
In particular, the empty set $\emptyset$ and any singleton $\{i\}$ with $i \in [d]$
are stable.
Denote $S(G)=\{S_1,\ldots,S_n\}$ the set of all stable sets of $G$.
Given a subset $S \subset [d]$, we associate the $(0,1)$-vector $\rho(S)=\sum_{j \in S} \eb_j$. Here $\eb_j$ is the $j$th unit coordinate vector in $\RR^d$. For example, $\rho(\emptyset)=(0,\ldots,0) \in \RR^d$.
Let $\KK[\tb,s]:=\KK[t_1,\ldots,t_d,s]$ be the polynomial ring in $d+1$ variables  over a field $\KK$.
Given a nonnegative integer vector 
$\ab=(a_1,\ldots,a_d) \in  \ZZ_{\geq 0}^d$, we write 
$\tb^{\ab}:=t_1^{a_1} t_2^{a_2}\cdots t_d^{a_d} \in \KK[\tb,s]$.
The \textit{stable set ring} of $G$ is 
\[
\KK[G]:=\KK[\tb^{\rho(S_1)} s,\ldots, \tb^{\rho(S_n)} s] \subset \KK[\tb,s]. 
\]
We regard $\KK[G]$ as a homogeneous algebra by setting each $\deg (\tb^{\rho(S_i)} s)=1$.
Note that $\KK[G]$ is a toric ring.
Let $R[G]=\KK[x_{S_1},\ldots,x_{S_n}]$ denote the polynomial ring in $n$ variables over  $\KK$ with each $\deg(x_{S_i})=1$.
The \textit{stable set ideal} of $G$ is the kernel of the surjective homomorphism 
$\pi:R[G] \to \KK[G]$ defined by $\pi(x_{S_i})=\tb^{\rho(S_i)}s$ for $1 \leq i \leq n$.
Note that $I_G$ is a toric ideal, and hence a prime ideal generated by homogeneous binomials.
The toric ring $\KK[G]$ is called \textit{quadratic}
if $I_G$ is generated by quadratic binomials.
We say that ``$I_G$ is generated by quadratic binomials'' even if $I_G =\{0\}$ (or equivalently, $G$ is complete).
It is easy to see that a homogeneous binomial
$x_{S_{i_1}} \cdots x_{S_{i_r}} - x_{S_{j_1}} \cdots x_{S_{j_r}} \in R[G]$ belongs to $I_G$
if and only if 
$\bigcup_{\ell=1}^r S_{i_\ell}=\bigcup_{\ell=1}^r S_{j_\ell}$ 
as multisets.
See, e.g., \cite{HHO} for details on toric rings and toric ideals.

We can describe a system of generators of $I_G$ in terms of $k$-colorings.
Given a graph $G$ on the vertex set $[d]$, and 
$\ab = (a_1,\ldots,a_d) \in \ZZ_{\ge 0}^d$,
let $G_\ab$ be the graph obtained from $G$ by replacing each vertex $i \in [d]$ 
with a complete graph $G^{(i)}$ of $a_i$ vertices (if $a_i =0$, then just delete the vertex $i$),
and joining all vertices $x \in G^{(i)}$ and $y \in G^{(j)}$ such that $\{i,j\}$ is an edge of $G$.
In particular, if $\ab =(1,\ldots,1)$, then $G_\ab = G$.
If $\ab = {\bf 0}$, then $G_\ab$ is the null graph (a graph without vertices). 
In addition, if $\ab$ is a $(0,1)$-vector, namely, $\ab \in \{0,1\}^d$, then $G_\ab$ is an induced subgraph of $G$.
If $\ab$ is a positive vector, then $G_\ab$ is called a 
\textit{replication graph} of $G$.
In general, $G_\ab$ is a replication graph of an induced subgraph of $G$.
Given a $k$-coloring $f$ of $G_{\ab}$ with $\ab \in \ZZ^d_{\geq 0}$,
we associate $f$ with a monomial
\[
\xb_f :=x_{S_{i_1}} \cdots x_{S_{i_k}} \in R[G], 
\]
where
$S_{i_\ell} =  \{j \in [d] \mid G^{(j)} \cap f^{-1}(\ell) \ne \emptyset\}$
for $\ell=1,2,\ldots, k$.
Conversely, let $m=x_{S_{i_1}} \cdots x_{S_{i_k}} \in R[G]$ be a monomial of degree $k$.
Then, for $\ab=(a_1,\ldots,a_n)$ with $a_p = |\{ \ell : p \in S_{i_\ell}\}|$,
there exists a $k$-coloring $f$ of $G_{\ab}$ such that $\xb_f = m$ (see \cite[Lemma 3.2]{OTkempe}).
For example, we consider the graphs $G$ and $G_{\ab}$ with $\ab=(2,1,0,2)$  as follows:
\begin{figure}[H]
\centering

\begin{subfigure}{0.45\textwidth}
\centering
\begin{tikzpicture}
\node[draw, shape=circle] (1) at (-1,1) {$1$};	
\node[draw, shape=circle] (2) at (0,2) {$2$}; 
\node[draw, shape=circle] (4) at (1,1) {$4$}; 
\node[draw, shape=circle] (3) at (0,0) {$3$}; 
\draw (1)--(2);
\draw (1)--(3);
\draw (2)--(3);
\draw (2)--(4);
\end{tikzpicture}
\caption{$G$}
\end{subfigure}
\quad
\begin{subfigure}{0.45\textwidth}
\centering
\begin{tikzpicture}

\node[draw, shape=circle] (11) at (-1,1.5) {\tiny{$1_1$}};
\node[draw, shape=circle] (12) at (-1,0.5) {\tiny{$1_2$}};

\node[draw, shape=circle] (21) at (0.5,2.3) {\tiny{$2_1$}};


\node[draw, shape=circle] (41) at (2.0,1.5) {\tiny{$4_1$}};
\node[draw, shape=circle] (42) at (2.0,0.5) {\tiny{$4_2$}};

\draw (11)--(21);
\draw (12)--(21);

\draw (21)--(41);
\draw (21)--(42);

\draw (11)--(12);

\draw (41)--(42);

\end{tikzpicture}
\caption{$G_{\ab}$ with $\ab=(2,1,0,2)$}
\end{subfigure}

\end{figure}
We define a $4$-coloring $f$ of $G_{\ab}$  by
\[
f(i)=\begin{cases}
    1 & i \in \{1_1,4_1\},\\
    2 & i \in \{2_1\},\\
    3 & i \in \{1_2\},\\
    4 & i \in \{4_2\}.
\end{cases}
\]
Then since $S_{i_1}=\{1,4\},S_{i_2}=\{2\}, S_{i_3}=\{1\},S_{i_4}=\{4\}$, one has \[\xb_f=x_{\{1,4\}} x_{\{2\}} x_{\{1\}} x_{\{4\}}.\]
On the other hand, we can obtain the $4$-coloring $f$ (up to exchanging colors and
exchanging the coloring of vertices in each clique $G^{(i)}$ of $G_\ab$) from the monomial
$x_{\{1,4\}} x_{\{2\}} x_{\{1\}} x_{\{4\}}$ as follows: for each variable $x_{S}$, we assign one color to a single copy $i_j$ of each vertex $i \in S$.

Note that, for $k$-colorings $f$ and $g$ of an induced subgraph of $G$, 
$\xb_f = \xb_g$ if and only if $g$ is obtained from $f$ by permuting colors.
It is easy to see that $f \sim_k g$ if $g$ is obtained from $f$ by permuting colors.
In this paper, we identify $f$ and $g$ if $g$ is obtained from $f$ by permuting colors.
%
%
Then we can describe a system of generators of $I_G$ as follows:

\begin{Proposition}[{\cite[Theorem 3.3]{OTkempe}}]
\label{prop:gen}
Let $G$ be a graph on $[d]$. Then one has 
\[
\xb_f- \xb_g \in I_G \iff  \mbox{$f$ and $g$ are $k$-colorings of $G_\ab$ with $\ab \in \ZZ^d_{\geq 0}$ and $k \geq \chi(G_{\ab})$} 
\]
and
\begin{align*}
I_G &= \langle \xb_f- \xb_g \mid \mbox{$f$ and $g$ are $k$-colorings of $G_\ab$ with $\ab \in \ZZ^d_{\geq 0}$ and $k \geq \chi(G_{\ab})$}  \rangle\\
&= \langle \xb_f- \xb_g \mid \mbox{$f$ and $g$ are k-colorings of $G_\ab$ with $\ab \in \{0,1,\dots,k\}^d$ and $k \geq \chi(G_{\ab})$}  \rangle.
\end{align*}
\end{Proposition}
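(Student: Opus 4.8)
The plan is to reduce everything to the \emph{multiset criterion} for membership in the toric ideal $I_G$ recalled in the excerpt: a homogeneous binomial $x_{S_{i_1}}\cdots x_{S_{i_r}}-x_{S_{j_1}}\cdots x_{S_{j_r}}$ lies in $I_G$ if and only if $\bigcup_{\ell=1}^r S_{i_\ell}=\bigcup_{\ell=1}^r S_{j_\ell}$ as multisets, equivalently $\sum_{\ell=1}^r\rho(S_{i_\ell})=\sum_{\ell=1}^r\rho(S_{j_\ell})$ in $\ZZ_{\ge 0}^d$. First I would record the bookkeeping fact that if $f$ is a $k$-coloring of $G_\ab$ with $\xb_f=x_{S_{i_1}}\cdots x_{S_{i_k}}$, then the multiplicity of a vertex $p$ in the multiset $\bigcup_{\ell=1}^k S_{i_\ell}$ equals $|\{\ell: p\in S_{i_\ell}\}|$, and this number is exactly $a_p$: the clique $G^{(p)}$ on $a_p$ vertices must receive $a_p$ pairwise distinct colors in the proper coloring $f$, and those are precisely the $\ell$ with $G^{(p)}\cap f^{-1}(\ell)\neq\emptyset$. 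Consequently $\pi(\xb_f)=\tb^{\ab}s^k$.

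Given this, the displayed equivalence is a matter of unwinding definitions. If $f$ and $g$ are $k$-colorings of the same $G_\ab$, then $\pi(\xb_f)=\tb^{\ab}s^k=\pi(\xb_g)$, so $\xb_f-\xb_g\in I_G$, and $k\ge\chi(G_\ab)$ holds automatically because $G_\ab$ admits the $k$-coloring $f$. Conversely, if $\xb_f-\xb_g\in I_G$ with $f$ a $k$-coloring of $G_\ab$ and $g$ a $k'$-coloring of $G_{\ab'}$, then $\tb^{\ab}s^k=\pi(\xb_f)=\pi(\xb_g)=\tb^{\ab'}s^{k'}$; comparing exponents of $s$ forces $k=k'$ (this also just reflects that $I_G$ is homogeneous with each $\deg x_S=1$), and comparing exponents of $\tb$ forces $\ab=\ab'$, so $f$ and $g$ are $k$-colorings of the same $G_\ab$, again with $k\ge\chi(G_\ab)$.

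For the first ideal equality, the inclusion $\supseteq$ is immediate from the equivalence just proved. For $\subseteq$, recall from the excerpt that $I_G$ is generated by homogeneous binomials; a binomial $m_1-m_2$ of degree $k$ lies in $I_G$ exactly when $\pi(m_1)=\pi(m_2)$, in which case $m_1$ and $m_2$ have a common multiplicity vector $\ab$, and by \cite[Lemma 3.2]{OTkempe} we may write $m_1=\xb_f$ and $m_2=\xb_g$ for $k$-colorings $f,g$ of $G_\ab$ (with $k\ge\chi(G_\ab)$ since such colorings exist), so $m_1-m_2$ appears among the listed generators. The second ideal equality is then a remark: in any proper $k$-coloring of $G_\ab$ the clique $G^{(p)}$ receives $a_p$ distinct colors, so $a_p\le k$; hence whenever $G_\ab$ admits a $k$-coloring one automatically has $\ab\in\{0,1,\dots,k\}^d$, and the two generating sets are literally the same.

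The only step with content beyond definition-chasing is the monomial-to-coloring correspondence \cite[Lemma 3.2]{OTkempe}, which I am assuming and which is where I would expect the main work in a self-contained treatment. Its proof amounts to: given a degree-$k$ monomial $x_{S_{i_1}}\cdots x_{S_{i_k}}$, assign to the $j$-th copy of $p$ the $j$-th smallest color $\ell$ with $p\in S_{i_\ell}$ for $1\le j\le a_p=|\{\ell:p\in S_{i_\ell}\}|$; properness within each clique $G^{(p)}$ is built into the construction, and properness across an edge $\{p,q\}\in E(G)$ holds because a shared color $\ell$ would force $p,q\in S_{i_\ell}$, contradicting that $S_{i_\ell}$ is stable. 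Everything else is a direct expansion of the definitions of $\xb_f$, the map $\pi$, and the standard grading on $R[G]$.
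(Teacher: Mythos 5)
Your proof is correct and is precisely the argument one assembles from the two facts the paper recalls immediately before the statement: the multiset criterion for membership of a binomial in the toric ideal $I_G$, and the monomial-to-coloring correspondence of \cite[Lemma 3.2]{OTkempe} (whose proof you also sketch correctly). The paper itself gives no proof here --- the proposition is imported by citation from \cite[Theorem 3.3]{OTkempe} --- so there is no in-paper argument to diverge from, but your reduction, including the observation that $a_p \le k$ makes the restriction to $\ab \in \{0,1,\dots,k\}^d$ automatic, is the expected one.
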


Now, we explain a relationship between $I_G$ and Kempe equivalence.

\begin{Proposition}[{\cite[Theorem 1.3]{OTkempe}}]
\label{prop:quad}
Let $G$ be a graph on $[d]$.
Then $I_G$ is generated by quadratic binomials if and only if for any $\ab \in \ZZ^d_{\geq 0}$ and $k \geq \chi(G_{\ab})$, all $k$-colorings of $G_{\ab}$ are Kempe equivalent, namely, one has
\[
{\rm Kc}(G_{\ab}, k)=1.
\]
\end{Proposition}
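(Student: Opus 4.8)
This statement is quoted as \cite[Theorem 1.3]{OTkempe}, but here is the route I would take. The claim is an equivalence, and the bridge between its two sides is a dictionary between Kempe switchings on the replication graphs $G_\ab$ and multiplication by \emph{quadratic} binomials of $I_G$. I would let $J$ be the ideal generated by all quadratic binomials lying in $I_G$, so that ``$I_G$ is generated by quadratic binomials'' means exactly $J=I_G$ (automatic when $G$ is complete, where $I_G=\{0\}=J$), and prove that, for $k$-colorings $f,g$ of a common $G_\ab$, one has $\xb_f-\xb_g\in J$ if and only if $f\sim_k g$. Granting this, the proposition follows at once: by Proposition~\ref{prop:gen} the ideal $I_G$ is generated by the binomials $\xb_f-\xb_g$ ranging over pairs of $k$-colorings of the graphs $G_\ab$, so $J=I_G$ holds precisely when every such generator lies in $J$, i.e.\ when $f\sim_k g$ for every such pair, i.e.\ when ${\rm Kc}(G_\ab,k)=1$ for all $\ab\in\ZZ_{\ge 0}^d$ and $k\ge\chi(G_\ab)$.

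The easy half of the dictionary is ``Kempe switch $\Rightarrow$ quadratic move''. If $g$ is obtained from $f$ by a Kempe switching exchanging colors $i,j$ along a connected component $H$ of $G_\ab[f^{-1}(i)\cup f^{-1}(j)]$, then only the color classes $i$ and $j$ change, so $\xb_f=w\,x_Sx_T$ and $\xb_g=w\,x_{S'}x_{T'}$ for a common monomial $w$ of degree $k-2$, where $S,T$ (respectively $S',T'$) record the classes of colors $i,j$ before (respectively after) the switch. Tracking a vertex $p$ through the switch shows that it merely migrates between these two classes or stays put, so $S\cup T=S'\cup T'$ as multisets; hence $x_Sx_T-x_{S'}x_{T'}$ is a quadratic binomial in $I_G$ and $\xb_f-\xb_g=w\,(x_Sx_T-x_{S'}x_{T'})\in J$. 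Telescoping along a Kempe chain gives $\xb_f-\xb_g\in J$ whenever $f\sim_k g$, which already establishes the ``$\Leftarrow$'' implication of the proposition.

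For ``$\Rightarrow$'' I would assume $J=I_G$, fix $\ab$ and $k\ge\chi(G_\ab)$ and two $k$-colorings $f,g$ of $G_\ab$, and invoke the standard fact that if a difference of two monomials lies in an ideal generated by binomials, then those two monomials are linked by a sequence of ``moves'' coming from the generating binomials. Applied to $\xb_f-\xb_g\in I_G=J=\langle\text{quadratic binomials of }I_G\rangle$, this produces monomials $\xb_f=m_0,m_1,\dots,m_\ell=\xb_g$, all lying in one fiber and of degree $k$ — hence each equal to $\xb_{f_t}$ for some $k$-coloring $f_t$ of the \emph{same} graph $G_\ab$ — with consecutive $m_t,m_{t+1}$ differing by replacing a quadratic factor $x_Sx_T$ by $x_Ux_V$ with $x_Sx_T-x_Ux_V\in I_G$, i.e.\ $S\cup T=U\cup V$ as multisets. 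It remains to see $f_t\sim_k f_{t+1}$. Choosing representatives so that $f_t$ and $f_{t+1}$ agree on the $k-2$ colors recorded by the shared factor, the remaining copies carry two proper $2$-colorings of the subgraph $W$ they span; since $S,T,U,V$ are stable, $W$ splits as a disjoint union of isolated edges (one per vertex of $S\cap T=U\cap V$) together with the bipartite graph $G[S\triangle T]=G[U\triangle V]$. As any two proper $2$-colorings of a bipartite graph differ by flipping colors on whole connected components, and each such flip is a legitimate Kempe switching on $G_\ab$ affecting only colors $1$ and $2$, we obtain $f_t\sim_k f_{t+1}$; composing the steps yields $f\sim_k g$, i.e.\ ${\rm Kc}(G_\ab,k)=1$.

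The hard part is the bookkeeping in this last paragraph rather than any conceptual subtlety: one must keep in mind that a monomial determines a coloring only up to permuting colors and permuting copies within each clique $G^{(i)}$, check that the intermediate monomials $m_t$ really do correspond to $k$-colorings of the \emph{fixed} graph $G_\ab$, and verify that the component flips used to pass from the $2$-coloring $\{S,T\}$ to $\{U,V\}$ leave the other $k-2$ colors untouched. One should also dispose of the degenerate cases (for instance $G$ complete, or $\ab$ with zero entries) where $G_\ab$ is empty or has no vertices.
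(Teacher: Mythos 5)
The paper does not prove this proposition itself (it is imported from \cite{OTkempe}), but your argument is correct and follows essentially the same route the paper spells out for the parallel Theorem~\ref{ExamKempe}: the switch--quadratic-binomial dictionary, telescoping for one implication, and for the other a decomposition of $\xb_f-\xb_g$ into quadratic moves, each realized by flipping two colors on connected components of a bipartite subgraph. The extra bookkeeping your replication-graph setting demands --- that the edges $\{p_a,p_b\}$ for $p\in S\cap T$ really are isolated components of $W$, and that permuting copies inside a clique is itself achievable by Kempe switchings --- does go through, because stability of $S$ and $T$ forces each vertex of $S\cap T$ to be non-adjacent in $G$ to every other vertex of $S\cup T$.
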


We consider the ideal $\langle [I_G]_2 \rangle$ of $R[G]$ generated by all quadratic binomials of $I_G$, namely, 
\begin{align*}
\langle [I_G]_2 \rangle &=\langle \xb_f- \xb_g \mid \mbox{$f$ and $g$ are 2-colorings of $G_\ab$ with $\ab \in \ZZ^d_{\geq 0}$}  \rangle \\
    &=\langle \xb_f- \xb_g \mid \mbox{$f$ and $g$ are 2-colorings of $G_\ab$ with $\ab \in \{0,1,2\}^d$}  \rangle \subset R[G].
\end{align*}

\begin{Proposition}
\label{prop:exam_kempe}
    Let $G$ be a graph on $[d]$ and $f,g$ $k$-colorings of $G$.
    Then $f \sim_k g$  if and only if $\xb_f- \xb_g \in \langle [I_G]_2 \rangle$. 
\end{Proposition}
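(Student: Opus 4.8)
The plan is to translate a single Kempe switching on $G$ into a ``quadratic move'' on a monomial of $R[G]$, and conversely. For the implication $f \sim_k g \Rightarrow \xb_f-\xb_g\in\langle[I_G]_2\rangle$: given a Kempe sequence $f=f_0,\dots,f_s=g$ we have $\xb_f-\xb_g=\sum_{t=1}^{s}(\xb_{f_{t-1}}-\xb_{f_t})$, so it suffices to treat the case in which $g$ arises from $f$ by one Kempe switching, say on colors $i,j$ and a connected component $H$ of $G[f^{-1}(i)\cup f^{-1}(j)]$. Since the switching changes only the two color classes $i$ and $j$,
\[
\xb_f-\xb_g=\Bigl(\prod_{\ell\neq i,j}x_{f^{-1}(\ell)}\Bigr)\bigl(x_{f^{-1}(i)}x_{f^{-1}(j)}-x_{g^{-1}(i)}x_{g^{-1}(j)}\bigr).
\]
All four sets occurring here are stable, and $f^{-1}(i)\cup f^{-1}(j)=g^{-1}(i)\cup g^{-1}(j)$ with both sides disjoint unions, so the quadratic factor is zero or a quadratic binomial of $I_G$; either way it lies in $\langle[I_G]_2\rangle$, and hence so does $\xb_f-\xb_g$.

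For the converse I would invoke the standard combinatorial description of membership in an ideal generated by differences of monomials: $\xb_f-\xb_g\in\langle[I_G]_2\rangle$ if and only if there is a chain of degree-$k$ monomials $\xb_f=m_0,m_1,\dots,m_r=\xb_g$ in $R[G]$ such that, for each $t$, the monomial $m_{t+1}$ is obtained from $m_t$ by replacing a factor $x_Sx_T$ by $x_{S'}x_{T'}$ with $x_Sx_T-x_{S'}x_{T'}\in[I_G]_2$. The decisive observation is that such a move preserves the multiset of vertices underlying the monomial, hence the associated exponent vector $\ab$; since $m_0=\xb_f$ has $\ab=(1,\dots,1)$, so does every $m_t$, and therefore each $m_t$ equals $\xb_{h_t}$ for an honest $k$-coloring $h_t$ of $G$ itself (not merely of a replication graph), with $h_0=f$ and $h_r=g$ (colorings differing by a permutation of colors being identified). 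It thus remains to prove $h_t\sim_k h_{t+1}$ for every $t$.

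In such a step $S,T$ (respectively $S',T'$) are disjoint stable sets with $S\cup T=S'\cup T'=:W$, so $(S,T)$ and $(S',T')$ are proper $2$-colorings of the bipartite induced subgraph $G[W]$. Any two proper $2$-colorings of a bipartite graph are Kempe equivalent: on each connected component the $2$-coloring is unique up to interchanging the two colors, and interchanging the colors on a single component is itself a Kempe switching. Carrying $(S,T)$ to $(S',T')$ by a sequence of such switchings of $G[W]$ --- each using the two colors that the current $k$-coloring assigns to $S$ and $T$ --- and lifting each one to $G$ is legitimate because throughout this process the union of those two color classes stays equal to $W$, so the bichromatic connected components in $G$ are precisely the connected components of $G[W]$. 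This produces a $k$-coloring $h''$ of $G$ with $\xb_{h''}=\xb_{h_{t+1}}$, i.e.\ $h''=h_{t+1}$ after permuting colors; hence $h_t\sim_k h_{t+1}$, and by transitivity $f\sim_k g$.

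The step I expect to be the main obstacle is this last one in the converse, and in particular the point that the quadratic moves never leave the set of honest $k$-colorings of $G$: a priori the intermediate monomials could correspond to $k$-colorings of nontrivial replication graphs $G_\ab$, for which Proposition~\ref{prop:quad} shows that ${\rm Kc}(G_\ab,k)$ may exceed $1$, so that the desired conclusion would fail. The invariance $\ab=(1,\dots,1)$ is exactly what rules this out and forces each relevant graph $G[W]$ to be an induced --- hence bipartite --- subgraph of $G$; the remaining work is the bookkeeping needed to check that the lifted Kempe switchings stay valid at every intermediate coloring.
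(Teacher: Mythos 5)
Your proof is correct and takes essentially the same route as the paper's (Theorem \ref{ExamKempe}): one Kempe switching produces a quadratic binomial supported on a bichromatic subgraph, and conversely each quadratic move preserves the exponent vector $(1,\dots,1)$, is supported on a bipartite induced subgraph $G[W]$ equal to the union of two color classes, and is therefore realized by componentwise Kempe switchings on that subgraph. The only presentational difference is that you invoke the chain-of-moves characterization of membership in a pure-difference binomial ideal up front, whereas the paper derives the same reduction from the expression $\xb_f-\xb_g=\sum_{r}\xb_{w_r}(\xb_{f_r}-\xb_{g_r})$ of \cite[Lemma 3.8]{HHO} by induction on a minimal number of summands.
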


We will see a proof of this proposition in Section \ref{sec:kempe} (Theorem \ref{ExamKempe})
by using a similar discussion in the proof of Proposition \ref{prop:quad} in \cite{OTkempe}.

\section{Gr\"{o}bner bases}
\label{sect:GB}
In this section, we give a brief introduction to Gr\"{o}bner bases.
Let $R=\KK[x_1,x_2,\ldots,x_n]$ be a polynomial ring over a field $\KK$ with $\deg(x_i)=1$, and denote $\mathcal{M}_n$ the set of all monomials in the variables $x_1,\ldots,x_n$.
A \textit{monomial order} on $R$ is a total order $<$ on $\mathcal{M}_n$ such that
\begin{enumerate}
    \item $1< u$ for all $1 \neq u \in \mathcal{M}_n$;
    \item if $u,v \in \mathcal{M}_n$ and $u < v$, then $uw < vw$ for all $w \in \mathcal{M}_n$.
\end{enumerate}
We give an example of a monomial order. 

\begin{Example}
Let $u=x_1^{a_1} \cdots x_n^{a_n}$ and $v=x_1^{b_1} \cdots x_n^{b_n}$ be monomials in $\mathcal{M}_n$. We define the total order $<_{\rm rev}$ on $\mathcal{M}_n$ by setting $u <_{\rm rev} v$ if either (i) $\sum_{i=1}^n a_i < \sum_{i=1}^n b_i$, or 
(ii) $\sum_{i=1}^n a_i = \sum_{i=1}^n b_i$ and the rightmost nonzero component of the vector $(b_1-a_1, b_2-a_2,\ldots,b_n-a_n)$ is negative. It then follows that $<_{\rm rev}$ is a monomial order on $R$, which is called the (\textit{graded}) \textit{reverse lexicographic order} on $R$ induced by the ordering $x_1 > x_2 > \cdots > x_n$.
By reordering the variables, we can obtain another reverse lexicographic order on $R$.
Hence, there are $n!$ reverse lexicographic orders on $R$.
\end{Example}

Fix a monomial order $<$ on $R$.
For a nonzero polynomial $f$ of $R$, the \textit{support} of $f$, denoted by ${\rm supp}(f)$, is the set of all monomials appearing in $f$ and the \textit{initial monomial} ${\rm in}_<(f)$ of $f$ with respect to $<$ is the largest monomial belonging to ${\rm supp}(f)$ with respect to $<$.
Let $I$ be a nonzero ideal of $R$. Then the \textit{initial ideal} ${\rm in}_<(I)$
of $I$ with respect to $<$ is defined as follows:
\[
{\rm in}_<(I)= \langle {\rm in}_<(f) \mid 0 \neq f \in I \rangle \subset R.
\]
In general, even if $I=\langle f_1,\ldots,f_s \rangle$, it is not necessarily true that 
\[{\rm in}_<(I)=\langle {\rm in}_<(f_1),\ldots,{\rm in}_<(f_s) \rangle.
\]
A finite set $\{g_1,\ldots,g_s\}$ of nonzero polynomials belonging to $I$ is called a \textit{Gr\"{o}bner basis} of $I$ with respect to $<$ if ${\rm in}_<(I)=\langle {\rm in}_<(g_1),\ldots,{\rm in}_<(g_s) \rangle$.
Note that if $\{g_1,\ldots,g_s\}$ is a Gr\"{o}bner basis of $I$, then $I=\langle g_1,\ldots,g_s \rangle$.
It is known that $I$ always has a Gr\"{o}bner basis.
A Gr\"{o}bner basis $\Gc$ for $I$ with respect to $<$ is called \textit{reduced} if the following conditions hold:
\begin{enumerate}
    \item for all $p \in \Gc$, the coefficient of ${\rm in}_<(p)$ in $p$ equals $1$;
    \item for all $p \in \Gc$, no monomial of $p$ lies in $\langle  {\rm in}_<(g) \mid g \in \Gc \setminus \{p\}\rangle$.
\end{enumerate}
 A nonzero ideal $I$ has a unique reduced Gr\"{o}bner basis with respect to a fixed monomial order $<$. In particular, given a Gr\"{o}bner basis, we can easily get the reduced Gr\"{o}bner basis from it.

Next we introduce a method for determining whether a finite system of generators of $I$ is a Gr\"{o}bner basis of $I$ with respect to $<$.
For two nonzero polynomials $f$ and $g$ in $R$, the polynomial 
\[
S(f,g)=\frac{m}{c_f \cdot {\rm in}_< (f)} \cdot f -\frac{m}{c_g \cdot {\rm in}_< (g)} \cdot g
\]
is called the \textit{$S$-polynomial} of $f$ and $g$, where 
$c_f$ is the coefficient of ${\rm in}_<(f)$ in $f$ and $c_g$ is that of ${\rm in}_<(g)$ in $g$, and $m$ is the least common multiple of the initial monomials ${\rm in}_<(f)$ and ${\rm in}_<(g)$.

\begin{Lemma}[Buchberger's Criterion {\cite[Chapter 2, \S 9, Theorem 3]{CLO}}]
    Let $I$ be a nonzero ideal of $R$, $\mathcal{G}=\{g_1,\ldots,g_s\}$ a finite system of generators of $I$.
    Then $\Gc$ is a Gr\"{o}bner basis of $I$ with respect to a monomial order $<$ on $R$ if and only if the remainder of $S$-polynomial $S(g_i,g_j)$ on division by $\Gc$ is $0$ for all $i \neq j$.
\end{Lemma}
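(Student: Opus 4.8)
The plan is to establish both implications of the stated equivalence, the bulk of the work lying in the converse. For the forward direction, suppose $\Gc = \{g_1,\dots,g_s\}$ is a Gr\"obner basis of $I$. Each $S$-polynomial $S(g_i,g_j)$ is, by its very definition, an $R$-linear combination of $g_i$ and $g_j$, hence lies in $I$. It is a standard consequence of the definition of the initial ideal that any element of $I$ has remainder $0$ on division by a Gr\"obner basis: the division algorithm cannot stall, because the initial monomial of any nonzero element of $I$ lies in ${\rm in}_<(I) = \langle {\rm in}_<(g_1),\dots,{\rm in}_<(g_s)\rangle$ and is therefore divisible by some ${\rm in}_<(g_i)$. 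Applying this to $S(g_i,g_j) \in I$ gives remainder $0$ for every pair $i \neq j$.

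For the converse, assume every $S(g_i,g_j)$ has remainder $0$ modulo $\Gc$; equivalently, each admits a \emph{standard representation} $S(g_i,g_j) = \sum_{\ell} a_{ij\ell}\, g_\ell$ with ${\rm in}_<(a_{ij\ell} g_\ell) \le {\rm in}_<(S(g_i,g_j))$ for all $\ell$ (whenever $a_{ij\ell}\neq 0$). I must show ${\rm in}_<(I) = \langle {\rm in}_<(g_1),\dots,{\rm in}_<(g_s)\rangle$, i.e. that every $0 \neq f \in I$ has ${\rm in}_<(f)$ divisible by some ${\rm in}_<(g_i)$. Fix such an $f$ and write $f = \sum_{i=1}^s h_i g_i$. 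Among all such representations, choose one minimizing $\delta := \max_{\,i\,:\,h_i \neq 0} {\rm in}_<(h_i g_i)$ with respect to $<$; such a minimum exists since $<$ well-orders the monomials. Clearly ${\rm in}_<(f) \le \delta$, and if equality holds then ${\rm in}_<(f) = {\rm in}_<(h_i g_i)$ for some $i$ attaining the maximum, hence is divisible by ${\rm in}_<(g_i)$, and we are done.

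So assume for contradiction that ${\rm in}_<(f) < \delta$, and let $A = \{\, i : {\rm in}_<(h_i g_i) = \delta \,\}$. Then the leading terms of the $h_i g_i$ with $i \in A$ must cancel. The key technical ingredient is the classical cancellation lemma: a sum of terms all having the same multidegree $\delta$ whose total has multidegree strictly less than $\delta$ can be rewritten as a $\KK$-linear combination of the $S$-polynomials $S(g_i,g_j)$ with $i,j \in A$, each multiplied by a suitable monomial, in such a way that every summand has multidegree strictly below $\delta$. Substituting into this rewriting the standard representations of the $S(g_i,g_j)$ supplied by the hypothesis, expanding, and collecting terms by $g_\ell$, produces a new representation $f = \sum_i h_i' g_i$ in which every nonzero term $h_i' g_i$ has initial monomial strictly less than $\delta$. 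This contradicts the minimality of $\delta$, so in fact ${\rm in}_<(f) = \delta$, which completes the proof.

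The step I expect to be the main obstacle is precisely the cancellation lemma together with the bookkeeping in the substitution: one must verify carefully that, after replacing each $S$-polynomial by its standard representation and regrouping, the maximal initial monomial strictly decreases, using property (2) of a monomial order to bound the initial monomials of the products $a_{ij\ell} g_\ell$ multiplied by the auxiliary monomials. Everything else --- the forward implication, the existence of a $\delta$-minimal representation by well-ordering, and the final contradiction --- is formal.
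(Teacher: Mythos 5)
The paper does not prove this lemma at all --- it is quoted verbatim as a known result and deferred to the cited reference \cite[Chapter 2, \S 9, Theorem 3]{CLO}. Your argument is precisely the standard proof given there: trivial forward direction via reduction of elements of $I$ to zero, and for the converse a $\delta$-minimal representation $f=\sum h_i g_i$, the cancellation lemma expressing a top-degree cancellation as a monomial combination of $S$-polynomials, and substitution of their standard representations to force $\delta$ to drop. The outline is correct; the only point needing care (which you already flag) is that the cancellation lemma is applied after splitting each $h_i$ with $i\in A$ into its leading term plus lower-order part, so that the sum $\sum_{i\in A}\mathrm{in}_<(h_i)g_i$ is genuinely a sum of products by single terms of multidegree $\delta$ before being rewritten via $S$-polynomials.
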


The algorithm called 
\textit{Buchberger's Algorithm} (\cite[Chapter 2, \S 7, Theorem 2]{CLO}) is based on Buchberger's Criterion,
and computes a Gr\"{o}bner basis for $I$ from a finite system of generators of $I$.

As applications of Gr\"{o}bner bases, we can determine if a polynomial $f$ belongs to an ideal. This problem is called the \textit{ideal membership problem}. 
If $\Gc$ is a Gr\"{o}bner basis for an ideal $I$ of $R$ with respect to a monomial order $<$, then every polynomial of $f \in R$ has a unique remainder on division by $\Gc$.
The remainder is called the \textit{normal form} of $f$ with respect to $\Gc$.

\begin{Lemma}[{\cite[Chapter 2, \S 6, Corollary 2]{CLO}}] \label{IMP}
     Let $I$ be a nonzero ideal of $R$ and $\mathcal{G}$ a Gr\"{o}bner basis of $I$ with respect to a monomial order $<$ on $R$. Fix a polynomial $f$ in $R$.
Then $f \in I$ if and only if the normal form of $f$ with respect to $\Gc$ equals $0$.
     \end{Lemma}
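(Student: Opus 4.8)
The plan is to derive the membership criterion from two facts already recorded in Section~\ref{sect:GB}: the division algorithm, which for an ordered tuple $\Gc=\{g_1,\dots,g_s\}$ writes any $f\in R$ as $f=\sum_{i=1}^s q_i g_i+r$ with no monomial of $r$ divisible by any ${\rm in}_<(g_i)$; and the defining property of a Gr\"obner basis, ${\rm in}_<(I)=\langle {\rm in}_<(g_1),\dots,{\rm in}_<(g_s)\rangle$, which makes the remainder $r$ well defined, i.e.\ equal to the normal form of $f$ regardless of the choices made during the division.

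For the ``if'' direction I would argue directly: if the normal form of $f$ is $0$, then $f=\sum_{i=1}^s q_i g_i$, and since a Gr\"obner basis generates its ideal we get $f\in\langle g_1,\dots,g_s\rangle=I$.

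For the ``only if'' direction, suppose $f\in I$ and let $r$ be its normal form, so that $r=f-\sum_{i=1}^s q_i g_i\in I$ because both $f$ and $\sum_i q_i g_i$ lie in $I$. Assume for contradiction that $r\ne 0$. Then ${\rm in}_<(r)$ is a nonzero element of ${\rm in}_<(I)$, hence belongs to the monomial ideal $\langle {\rm in}_<(g_1),\dots,{\rm in}_<(g_s)\rangle$; consequently ${\rm in}_<(r)$ is divisible by ${\rm in}_<(g_i)$ for some $i$. But ${\rm in}_<(r)$ is one of the monomials occurring in $r$, so this contradicts the property of the remainder. Hence $r=0$, as desired.

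I expect the only non-formal step to be the implication ``a monomial lying in $\langle {\rm in}_<(g_1),\dots,{\rm in}_<(g_s)\rangle$ is a multiple of some ${\rm in}_<(g_i)$,'' which is the basic structure theorem for monomial ideals; this, together with the well-definedness of the normal form for a Gr\"obner basis, carries the whole argument, while the rest is routine manipulation of the division identity.
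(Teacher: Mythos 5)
Your argument is correct and is precisely the standard proof of this membership criterion: division to get $f=\sum_i q_ig_i+r$, then in the ``only if'' direction observe $r\in I$, so a nonzero $r$ would have ${\rm in}_<(r)\in{\rm in}_<(I)=\langle{\rm in}_<(g_1),\dots,{\rm in}_<(g_s)\rangle$, contradicting the remainder condition. The paper gives no proof of its own, citing \cite[Chapter 2, \S 6, Corollary 2]{CLO}, and your argument coincides with the one given there.
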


Next, we review a method for computing Hilbert functions.
Let $I$ be a graded ideal of $R$. The numerical function $H(R/I,-): \ZZ_{\geq 0} \to \ZZ_{\geq 0}$ with $H(R/I,k)=\dim_{\KK} R_k/I_k$ is called the \textit{Hilbert function} of $R/I$, where $R_k$ (resp. $I_k$) is the homogeneous part of degree $k$ of $R$ (resp. $I$) and $\dim_{\KK} R_k/I_k$ is the dimension of $R_k/I_k$ as $\KK$-vector spaces.
For a monomial order $<$ on $R$, a monomial $u \in \mathcal{M}_n$ is said to be \textit{standard} with respect to ${\rm in}_<(I)$ if $u \notin {\rm in}_<(I)$.
\begin{Lemma}[{\cite[Theorem 1.19]{HHO}}]
Let $I$ be a nonzero graded ideal of $R$ and fix a monomial order $<$ on $R$. Let $\mathcal{B}_k$ denote the set of standard monomials of degree $k$ with respect to ${\rm in}_<(I)$.
Then $\mathcal{B}_k$ is a $\KK$-basis of $R_k/I_k$ as a $\KK$-vector space. 
In particular, one has
\[
H(R/I,k)=|\mathcal{B}_k|.
\]
\end{Lemma}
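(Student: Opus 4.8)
The plan is to prove the two ingredients hiding inside the word ``basis'': that the image of $\mathcal{B}_k$ spans $R_k/I_k$ over $\KK$, and that this image is $\KK$-linearly independent; the equality $H(R/I,k)=|\mathcal{B}_k|$ then follows at once from the definition of the Hilbert function. For the setup I would fix a finite Gr\"{o}bner basis $\mathcal{G}=\{g_1,\dots,g_s\}$ of $I$ with respect to $<$, and first record that, because $I$ is graded, $\mathcal{G}$ can be chosen to consist of homogeneous polynomials: replace each $g_i$ by its homogeneous components, or start from a homogeneous generating set and run Buchberger's algorithm; in either case the resulting homogeneous elements still have initial monomials generating ${\rm in}_<(I)$. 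This homogeneity is exactly what will let the division algorithm respect the grading.

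For spanning, take an arbitrary $f\in R_k$ and run the division algorithm of $f$ by $\mathcal{G}$ (as in Section \ref{sect:GB}), obtaining $f=\sum_i q_i g_i + r$ where no term of $r$ is divisible by any ${\rm in}_<(g_i)$. Since the monomials ${\rm in}_<(g_1),\dots,{\rm in}_<(g_s)$ generate ${\rm in}_<(I)$, no monomial of $r$ lies in ${\rm in}_<(I)$, i.e.\ $r$ is a $\KK$-linear combination of standard monomials, and $f-r=\sum_i q_i g_i\in I$. The bookkeeping point is that, since each $g_i$ is homogeneous and every cancellation step subtracts a term of the shape $(\text{monomial})\cdot g_i$ chosen to kill a monomial of the current polynomial, each intermediate polynomial stays homogeneous of degree $k$; hence $r\in R_k$, so $r$ is a $\KK$-combination of elements of $\mathcal{B}_k$. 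Therefore $\bar f=\bar r$ in $R_k/I_k$, which shows $\mathcal{B}_k$ spans.

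For linear independence, suppose $\sum_{u\in\mathcal{B}_k}c_u u\in I_k$ with coefficients $c_u\in\KK$ not all zero, and put $g:=\sum_u c_u u\neq 0$. Then $g\in I$, so ${\rm in}_<(g)\in{\rm in}_<(I)$. But ${\rm in}_<(g)$ is the $<$-largest monomial of $g$ with nonzero coefficient, hence ${\rm in}_<(g)=u$ for some $u\in\mathcal{B}_k$, and such a $u$ is standard, i.e.\ $u\notin{\rm in}_<(I)$ --- a contradiction. Hence the images of the elements of $\mathcal{B}_k$ in $R_k/I_k$ are $\KK$-linearly independent, so $\mathcal{B}_k$ maps to a $\KK$-basis of $R_k/I_k$ and $H(R/I,k)=\dim_\KK R_k/I_k=|\mathcal{B}_k|$.

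The one genuinely delicate step --- the one I would be most careful to justify --- is the claim that the remainder $r$ lands in $R_k$ rather than spreading over several homogeneous degrees; this is precisely where the graded hypothesis on $I$ is used, via the existence of a homogeneous Gr\"{o}bner basis. Everything else is a direct unwinding of the definitions of initial ideal, Gr\"{o}bner basis, standard monomial, and normal form recorded in the preceding lemmas.
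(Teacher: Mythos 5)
Your argument is correct, and in fact the paper offers no proof of this lemma to compare against: it is quoted verbatim from \cite[Theorem 1.19]{HHO} (Macaulay's basis theorem) and used as a black box. Your two-step argument --- spanning via the division algorithm with a homogeneous Gr\"obner basis, and linear independence via the observation that a nonzero element of $I$ has its initial monomial in ${\rm in}_<(I)$ while every monomial of a combination of elements of $\mathcal{B}_k$ is standard --- is the standard textbook proof, and the point you single out as delicate (that the remainder of a degree-$k$ element lands in $R_k$, which requires the Gr\"obner basis to be homogeneous, available exactly because $I$ is graded) is indeed the only place where care is needed.
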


Finally, we recall a property of the saturation of an ideal.
For a nonzero ideal $I$ of $R$ and a polynomial $f \in R$, the \textit{saturation} of $I$  with respect to $f$ is the ideal
\[
I : f^{\infty}=\{g \in R \mid \mbox{there exists $i>0$ such that $f^ig \in I$}\}.
\]

\begin{Lemma}[{\cite[Proposition 1.40]{HHO}}] \label{colonGB}
    Let $I$ be a nonzero graded ideal of $R$ and let $\mathcal{G}$ be the reduced Gr\"{o}bner basis of $I$ with respect to the reverse lexicographic order induced by $x_1 > x_2 > \cdots > x_n$.
    Then
    \[
\{ g /x_n^k \mid g \in \mathcal{G}, \ k \in \ZZ_{\geq 0},  \ 
x_n^k \mbox{ divides } g,\ x_n^{k+1} \mbox{ does not divide } g \} 
    \]
    is a Gr\"{o}bner basis of \ $I : x_n^{\infty}$.
\end{Lemma}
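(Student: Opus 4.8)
The plan is to use the characteristic property of the reverse lexicographic order $<$ induced by $x_1 > \cdots > x_n$: for every nonzero homogeneous polynomial $f \in R$, if $x_n$ divides ${\rm in}_<(f)$, then $x_n$ divides every monomial in ${\rm supp}(f)$. I would prove this straight from the definition of $<$. Suppose some $u \in {\rm supp}(f)$ were not divisible by $x_n$. Since $I$ is graded and $<$ is a graded order, the reduced Gr\"{o}bner basis $\mathcal{G}$ consists of homogeneous polynomials, so $u$ and ${\rm in}_<(f)$ have the same total degree; hence their comparison is decided by the second clause of the definition. The exponent vector of ${\rm in}_<(f)$ minus that of $u$ has strictly positive last coordinate (namely the $x_n$-exponent of ${\rm in}_<(f)$), so that coordinate is the rightmost nonzero one of the difference, and the definition gives $u > {\rm in}_<(f)$, contradicting maximality of ${\rm in}_<(f)$. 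Iterating this, the exact power of $x_n$ dividing $f$ as a polynomial equals the exponent of $x_n$ in ${\rm in}_<(f)$.

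Now write $\mathcal{G} = \{g_1, \dots, g_s\}$, let $k_i$ be the largest power of $x_n$ dividing $g_i$, put $h_i := g_i / x_n^{k_i}$, and set $\mathcal{G}' := \{h_1, \dots, h_s\}$. By the previous paragraph $k_i$ is exactly the exponent of $x_n$ in ${\rm in}_<(g_i)$; and since dividing each term of $g_i$ by the monomial $x_n^{k_i}$ preserves $<$, we get ${\rm in}_<(h_i) = {\rm in}_<(g_i)/x_n^{k_i}$, which is not divisible by $x_n$. Moreover $x_n^{k_i} h_i = g_i \in I$, so $h_i \in I : x_n^\infty$; consequently $\langle {\rm in}_<(h_1), \dots, {\rm in}_<(h_s) \rangle \subseteq {\rm in}_<(I : x_n^\infty)$.

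For the reverse inclusion, take a nonzero $h \in I : x_n^\infty$; since $I : x_n^\infty$ is graded (because $x_n$ is homogeneous) we may assume $h$ is homogeneous, and it suffices to show ${\rm in}_<(h) \in \langle {\rm in}_<(h_1), \dots, {\rm in}_<(h_s)\rangle$. Choose $N > 0$ with $x_n^N h \in I$. Then $x_n^N {\rm in}_<(h) = {\rm in}_<(x_n^N h) \in {\rm in}_<(I) = \langle {\rm in}_<(g_1), \dots, {\rm in}_<(g_s) \rangle$ because $\mathcal{G}$ is a Gr\"{o}bner basis, so ${\rm in}_<(g_i) = x_n^{k_i}\,{\rm in}_<(h_i)$ divides $x_n^N {\rm in}_<(h)$ for some $i$. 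Writing ${\rm in}_<(h) = x_n^a w$ with $x_n \nmid w$ and comparing $x_n$-exponents (legitimate since $x_n \nmid {\rm in}_<(h_i)$), we obtain ${\rm in}_<(h_i) \mid w \mid {\rm in}_<(h)$. Hence ${\rm in}_<(I : x_n^\infty) = \langle {\rm in}_<(h_1), \dots, {\rm in}_<(h_s) \rangle$, that is, $\mathcal{G}'$ is a Gr\"{o}bner basis of $I : x_n^\infty$.

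The entire argument hinges on the reverse-lexicographic property established in the first paragraph; this is where the hypothesis on the order is essential and is the only nonroutine step. Everything afterward is elementary bookkeeping with divisibility of monomials, the one thing to watch being that the $x_n$-exponent of ${\rm in}_<(g_i)$ genuinely coincides with the largest power of $x_n$ dividing $g_i$ as a polynomial, which is exactly what makes the cancellation of $x_n$-powers in the final step valid.
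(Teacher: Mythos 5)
Your proof is correct. The paper does not prove this lemma itself---it is quoted from \cite[Proposition 1.40]{HHO}---and your argument is the standard one for that result: the crucial reverse-lexicographic property that $x_n \mid {\rm in}_<(f)$ forces $x_n$ to divide every monomial of a homogeneous $f$ (proved correctly from the definition of the order, using that the reduced Gr\"{o}bner basis of a graded ideal is homogeneous) is exactly the point on which the cited proof also turns, and the remaining divisibility bookkeeping in both inclusions is handled correctly.
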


\section{2-coloring ideals}
\label{sect:2-color}
Given a graph $G$ on $[d]$, we define the following two ideals of $R[G]$:
\begin{align*}
    J_G:=& \langle \xb_f - \xb_g \mid \mbox{$f$ and $g$ are $2$-colorings of $G_{\ab}$ with $\ab \in \{0,1\}^d$}\rangle \\
    =&\langle \xb_f - \xb_g \mid \mbox{$f$ and $g$ are $2$-colorings of an induced subgraph of $G$} \rangle,\\
    L_G:=&\langle x_{S \setminus \{i\}} x_{\{i\}}- x_{S} x_\emptyset \mid i \in S \in S(G), \ |S| \ge 2\rangle. 
\end{align*}
We call $J_G$ the \textit{$2$-coloring ideal} of $G$.
Note that inclusions of ideals
\begin{equation}
\label{ineq}
L_G \subset J_G \subset  \langle [I_G]_2 \rangle \subset I_G    
\end{equation}
hold.
Moreover, from Proposition~\ref{prop:gen}, for $I \in \{ \langle [I_G]_2\rangle, J_G, L_G\}$, if $\xb_f-\xb_g \in I$, then $f$ and $g$ are $k$-colorings of $G_{\ab}$ with $\ab \in \ZZ_{\geq 0}^d$.

In this section, we discuss some properties of these ideals.
\begin{Lemma}
\label{colongraph}
    Let $G$ be a graph on $[d]$.
    Then one has 
 $L_G : \xb_{\emptyset}^{\infty}=J_G : \xb_{\emptyset}^{\infty}=\langle [I_G]_2 \rangle : \xb_{\emptyset}^{\infty} =I_G$.
  \end{Lemma}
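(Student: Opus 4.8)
The plan is to prove that all four ideals coincide with $I_G$ by sandwiching them between $L_G:x_\emptyset^{\infty}$ and $I_G$ (here $\xb_\emptyset=x_\emptyset$, the variable indexed by the empty stable set). Applying saturation by $x_\emptyset$ to the chain of inclusions \eqref{ineq}, and using that saturation preserves inclusions, one gets
\[
L_G : x_\emptyset^{\infty} \subseteq J_G : x_\emptyset^{\infty} \subseteq \langle [I_G]_2 \rangle : x_\emptyset^{\infty} \subseteq I_G : x_\emptyset^{\infty}.
\]
Since $I_G$ is prime and $x_\emptyset \notin I_G$ (because $\pi(x_\emptyset)= \tb^{\rho(\emptyset)}s = s \ne 0$), one has $I_G : x_\emptyset^{\infty} = I_G$: if $x_\emptyset^i g \in I_G$ then primeness forces $g \in I_G$, as $x_\emptyset \notin I_G$. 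Hence it remains only to prove the reverse inclusion $I_G \subseteq L_G : x_\emptyset^{\infty}$, and then every inclusion above is an equality.

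The heart of the argument is the following congruence modulo $L_G$, which I would prove by induction on $|S|$: for every $S \in S(G)$,
\[
x_\emptyset^{|S|}\, x_S \;\equiv\; x_\emptyset \prod_{i \in S} x_{\{i\}} \pmod{L_G}.
\]
The cases $|S|\le 1$ are immediate (for $S=\emptyset$ both sides equal $x_\emptyset$). For $|S|\ge 2$, choose $i\in S$; the defining generator $x_{S\setminus\{i\}}x_{\{i\}}-x_Sx_\emptyset$ of $L_G$ gives $x_Sx_\emptyset \equiv x_{S\setminus\{i\}}x_{\{i\}}\pmod{L_G}$, and multiplying by $x_\emptyset^{|S|-1}$ and applying the inductive hypothesis to the smaller stable set $S\setminus\{i\}$ yields the claim. (All monomials occurring are honest variables of $R[G]$, since subsets of stable sets are stable and singletons are stable.)

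Finally, by Proposition~\ref{prop:gen} — or simply because $I_G$ is a toric ideal — $I_G$ is generated by binomials $u-v$ with $u = x_{S_{i_1}}\cdots x_{S_{i_k}}$, $v = x_{T_{j_1}}\cdots x_{T_{j_k}}$, and $\bigcup_\ell S_{i_\ell} = \bigcup_\ell T_{j_\ell}$ as multisets; in particular $N := \sum_\ell |S_{i_\ell}| = \sum_\ell |T_{j_\ell}|$. Multiplying $u$ by $x_\emptyset^N$ and applying the congruence above factor by factor gives $x_\emptyset^N u \equiv x_\emptyset^k \prod_{p} x_{\{p\}} \pmod{L_G}$, where $p$ ranges over the multiset $\bigcup_\ell S_{i_\ell}$; the identical computation for $v$ produces $x_\emptyset^k$ times the product over $\bigcup_\ell T_{j_\ell}$. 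Since these multisets coincide, $x_\emptyset^N(u-v)\in L_G$, hence $u-v \in L_G : x_\emptyset^{\infty}$, and therefore $I_G \subseteq L_G : x_\emptyset^{\infty}$, completing the proof. The argument is elementary once the congruence is in place; the only point needing care is the exponent bookkeeping — the equality $\sum_\ell |S_{i_\ell}| = \sum_\ell |T_{j_\ell}|$ is exactly what guarantees both monomials acquire the same power of $x_\emptyset$ — together with the degenerate cases $S=\emptyset$ and $|S|=1$ in the induction, so I do not anticipate a genuine obstacle.
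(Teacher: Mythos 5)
Your proof is correct and follows essentially the same route as the paper's: saturate the chain of inclusions, use primeness of $I_G$ together with $x_\emptyset \notin I_G$ to get $I_G : x_\emptyset^{\infty} = I_G$, and then show $I_G \subseteq L_G : x_\emptyset^{\infty}$ by reducing each $x_S$ to a product of singleton variables modulo $L_G$ after multiplying by a suitable power of $x_\emptyset$. Your inductive congruence $x_\emptyset^{|S|} x_S \equiv x_\emptyset \prod_{i \in S} x_{\{i\}} \pmod{L_G}$ is just a cleaner packaging of the telescoping sums written out explicitly in the paper, and the exponent bookkeeping via the multiset equality is identical.
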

\begin{proof}
Since $L_G \subset J_G \subset  \langle [I_G]_2 \rangle \subset I_G$, we have 
$
L_G : \xb_{\emptyset}^{\infty}
\subset 
J_G : \xb_{\emptyset}^{\infty}
\subset
\langle [I_G]_2 \rangle : \xb_{\emptyset}^{\infty} 
\subset 
I_G: \xb_{\emptyset}^{\infty}.
$
In addition, since $I_G$ is prime and does not contain $x_\emptyset^k$ for any $k$, we have 
$I_G: \xb_{\emptyset}^{\infty} = I_G$.
Thus it is sufficient to show that 
$I_G \subset  L_G : \xb_{\emptyset}^{\infty}$.
Let 
\[
F = \prod_{i=1}^s  x_{S_i} - \prod_{i=1}^s  x_{S_i'} \in I_G,
\]
where $S_i = \{k_1^{(i)} ,\dots, k_{p_i}^{(i)}\}$, $S_i'= \{\ell_1^{(i)} ,\dots, \ell_{q_i}^{(i)}\}\in S(G)$ for each $i$.
Then
\begin{eqnarray*}
 & & x_{S_i} x_\emptyset^{p_i-1} - \prod_{j=1}^{p_i} x_{\{k_j^{(i)}\}}\\
&=&
x_\emptyset^{p_i-2} \left( x_{S_i} x_\emptyset- x_{S_i \setminus \{k_1^{(i)}\}} x_{\{k_1^{(i)}\}}\right) 
+
x_\emptyset^{p_i-3} x_{\{k_1^{(i)}\}} \left( x_{S_i \setminus \{k_1^{(i)}\}} x_\emptyset- x_{S_i \setminus \{k_1^{(i)}, k_2^{(i)}\}} x_{\{k_2^{(i)}\}}\right) \\
 & & + \cdots + 
\left(\prod_{j=1}^{p_i-2} x_{\{k_j^{(i)}\}}\right)
\left( x_{\{k_{p_i -1}^{(i)},k_{p_i}^{(i)} \}} x_\emptyset- x_{\{k_{p_i}^{(i)}\}} x_{\{k_{p_i -1}^{(i)} \}} \right)
\end{eqnarray*}
belongs to $L_G$.
Note that if binomials $u_1 - v_1,\dots,u_s - v_s$ belong to $L_G$, then 
\begin{eqnarray*}
u_1 \dots u_s - v_1 \dots v_s
&=& u_2 \cdots u_s (u_1 - v_1) + u_3 \cdots u_s v_1 (u_2 - v_2)  +u_4 \cdots u_s v_1 v_2 (u_3 - v_3)\\
&&
+ \cdots + v_1 \cdots v_{s-1} (u_s - v_s) 
\end{eqnarray*}
belongs to $L_G$.
Hence 
\[
G_1=
x_\emptyset^{\sum_{i=1}^s (p_i-1)}
\prod_{i=1}^s  x_{S_i} -
\prod_{i=1}^{s}\prod_{j=1}^{p_i} x_{\{k_j^{(i)}\}}
\]
belongs to $L_G$.
By the same argument, 
\[
G_2=
x_\emptyset^{\sum_{i=1}^s (q_i-1)}
\prod_{i=1}^s  x_{S_i'} -
\prod_{i=1}^{s}\prod_{j=1}^{q_i} x_{\{\ell_j^{(i)}\}}
\]
belongs to $L_G$.
Since $F$ belongs to $I_G$,
$\pi(\prod_{i=1}^s  x_{S_i})=\pi(\prod_{i=1}^s  x_{S_i'})$.
This implies that $\bigcup_{i=1}^s S_i=\bigcup_{i=1}^s S'_i$ as multisets. 
Hence one has $\sum_{i=1}^s p_i=\sum_{i=1}^s q_i$
and
\[
\prod_{i=1}^{s}\prod_{j=1}^{p_i} x_{\{k_j^{(i)}\}}=
\prod_{i=1}^{s}\prod_{j=1}^{q_i} x_{\{\ell_j^{(i)}\}}.
\]
Thus 
\[
x_\emptyset^{\sum_{i=1}^s (p_i-1)}F=
G_1-G_2
\]
belongs to $L_G$.
This implies that $F \in L_G : x^{\infty}_\emptyset$. 
Hence one has $I_G \subset L_G : x^{\infty}_\emptyset$.
\end{proof}
Therefore, combining Lemmata \ref{colonGB} and \ref{colongraph}, we obtain the following proposition.

\begin{Proposition} \label{I to I_G}
Let $G$ be a graph on $[d]$ and 
let $I \in \{\langle [I_G]_2 \rangle, J_G, L_G\}$.
If $\mathcal{G}$ is the reduced Gr\"{o}bner basis of $I$ with respect to a reverse lexicographic order such that $x_S  \geq x_{\emptyset}$ for any $S \in S(G)$, 
then
            \[
\{ g /x_{\emptyset}^k \mid g \in \mathcal{G}, k \in \ZZ_{\geq 0}, \mbox{$x_{\emptyset}^k$ divides $g$, $x_{\emptyset}^{k+1}$ does not divide $g$} \} 
    \]
    is a Gr\"{o}bner basis of $I_G$.
\end{Proposition}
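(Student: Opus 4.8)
The plan is to derive this proposition as a direct consequence of the two lemmas just proved: Lemma~\ref{colongraph}, which identifies the saturation of each of $\langle [I_G]_2 \rangle$, $J_G$, and $L_G$ by $x_\emptyset$ with the stable set ideal $I_G$; and Lemma~\ref{colonGB}, which describes a Gr\"obner basis of a saturation $I : x_n^\infty$ in terms of the reduced Gr\"obner basis of $I$ with respect to a reverse lexicographic order having $x_n$ as its smallest variable. The whole argument amounts to matching the hypotheses of the present statement with those of Lemma~\ref{colonGB} and then chaining the conclusions.

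First I would note that $\emptyset \in S(G)$, so $x_\emptyset$ is indeed one of the variables of $R[G]$, and that the hypothesis ``$x_S \geq x_\emptyset$ for every $S \in S(G)$'' forces $x_\emptyset$ to be the smallest variable of $R[G]$ under the given order (for $S \neq \emptyset$, totality of the order together with $x_S \neq x_\emptyset$ upgrades the inequality to $x_S > x_\emptyset$). Consequently, after relabeling the variables of $R[G]$ so that $x_\emptyset$ is listed last, the order in the proposition is precisely a reverse lexicographic order of the form $x_1 > \cdots > x_{n-1} > x_n = x_\emptyset$, which is exactly what Lemma~\ref{colonGB} requires.

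Next I would apply Lemma~\ref{colonGB} with $I \in \{\langle [I_G]_2 \rangle, J_G, L_G\}$ and $x_n = x_\emptyset$: if $\mathcal{G}$ is the reduced Gr\"obner basis of $I$ for this order, then the set
\[
\{\, g/x_\emptyset^k \mid g \in \mathcal{G},\ k \in \ZZ_{\geq 0},\ x_\emptyset^k \text{ divides } g,\ x_\emptyset^{k+1} \text{ does not divide } g \,\}
\]
is a Gr\"obner basis of $I : x_\emptyset^\infty$, and this is verbatim the set written in the statement. Finally, Lemma~\ref{colongraph} gives $I : x_\emptyset^\infty = I_G$ for each of the three choices of $I$, and substituting this in completes the proof.

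I do not anticipate a genuine obstacle here: the proposition is essentially a corollary of the two lemmas, and the only thing requiring care is the bookkeeping --- verifying that the divisibility/quotient description of the set agrees literally with the one in Lemma~\ref{colonGB}, and that the hypothesis on the monomial order is exactly what makes $x_\emptyset$ eligible to serve as the distinguished last variable there. One may also wish to remark that the reduced Gr\"obner basis is uniquely determined by the monomial order, so the $\mathcal{G}$ in the hypothesis is unambiguous and is the same object one feeds into Lemma~\ref{colonGB}.
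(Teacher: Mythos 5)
Your proposal is correct and matches the paper exactly: the paper derives this proposition by combining Lemma~\ref{colonGB} (applied with $x_\emptyset$ as the smallest variable in the reverse lexicographic order) with Lemma~\ref{colongraph} (identifying $I : x_\emptyset^\infty$ with $I_G$), which is precisely your argument.
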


Next, we discuss 
when equality holds in the inclusions $L_G \subset J_G \subset \langle [I_G]_2 \rangle \subset I_G$.
In particular, we characterize when these ideals are prime.
Let $\overline{G}$ denote the complement graph of a graph $G$.

\begin{Proposition}
\label{prime}
Let $G$ be a graph on $[d]$. Then one has the following.
\begin{enumerate}
    \item[{\rm (a)}] 
    Let $I \in \{ \langle [I_G]_2 \rangle, J_G, L_G\}$. Then $I$ is prime $\iff$ $I=I_G$.

    \item[{\rm (b)}]
    $I_G=\langle [I_G]_2 \rangle \iff I_G$ is generated by quadratic binomials.

    \item[{\rm (c)}]
    $\langle [I_G]_2 \rangle =J_G \iff \overline{G}$ has no $3$-cycles.
    
    \item[{\rm (d)}]
    $J_G=L_G \iff 
    G$ is a complete multipartite graph on the vertex set
    $V_1 \sqcup \cdots \sqcup V_t$ with $|V_j| \le 3$ for each $j$.
\end{enumerate}
\end{Proposition}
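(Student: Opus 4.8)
The plan is to prove the four parts essentially separately, reducing each to a finite combinatorial check and, wherever a strict inclusion is asserted, to an explicit witness binomial together with a short linear-algebra reason why it avoids the smaller ideal. Part~(a) is quick: ``$\Leftarrow$'' holds because $I_G$ is toric, hence prime; for ``$\Rightarrow$'', if $I\in\{\langle[I_G]_2\rangle,J_G,L_G\}$ is prime then, since $\pi(x_\emptyset)=s\neq 0$ gives $x_\emptyset\notin I_G\supseteq I$, primality of $I$ yields $I:x_\emptyset^\infty=I$, while Lemma~\ref{colongraph} yields $I:x_\emptyset^\infty=I_G$, so $I=I_G$. Part~(b) is just unravelling the definition: $\langle[I_G]_2\rangle$ is generated by quadratic binomials and $[I_G]_2$ is $\KK$-spanned by them, so $I_G=\langle[I_G]_2\rangle$ exactly when $I_G$ is generated by quadratic binomials.

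For part~(c), observe that $\overline G$ has a $3$-cycle iff $G$ has a stable set of size $3$. For ``$\Leftarrow$'', assuming $G$ has no stable set of size $3$, it suffices (since $J_G\subseteq\langle[I_G]_2\rangle$) to show every quadratic binomial $x_Sx_T-x_{S'}x_{T'}\in I_G$ lies in $J_G$. If $S\cap T=\emptyset$, then $S\sqcup T=S'\sqcup T'$ forces $S\cup T=S'\cup T'=:W$ with $S'\cap T'=\emptyset$, so $\{S,T\}$ and $\{S',T'\}$ are the colour classes of two $2$-colourings of $G[W]$ and the binomial is a defining generator of $J_G$. If $S\cap T\neq\emptyset$, then using $|S|,|T|\le 2$ a short case check shows the multiset $S\sqcup T$ has only the partition $\{S,T\}$ into two stable sets --- the one borderline case ($|S|=|T|=2$, $|S\cap T|=1$) is handled precisely because the only other candidate partition would produce a stable set of size $3$ --- so $\{S',T'\}=\{S,T\}$ and the binomial vanishes. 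For ``$\Rightarrow$'', given a stable set $\{i,j,k\}$ of size $3$, I would take $h=x_{\{i,j\}}x_{\{i,k\}}-x_{\{i,j,k\}}x_{\{i\}}$; since $\{i,j\}\sqcup\{i,k\}=\{i,j,k\}\sqcup\{i\}$ as multisets, $h$ is a quadratic binomial of $I_G$, hence $h\in\langle[I_G]_2\rangle$, but $h\notin J_G$ because every defining generator of $J_G$ has the form $x_Ax_B-x_{A'}x_{B'}$ with $A\cap B=A'\cap B'=\emptyset$, so $[J_G]_2$ lies in the $\KK$-span of the degree-$2$ monomials $x_Sx_T$ with $S\cap T=\emptyset$, and neither monomial of $h$ is of that form.

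For part~(d), since $L_G\subseteq J_G$ always, the task is to decide when $J_G\subseteq L_G$, and here I would use that in a complete multipartite graph every stable set lies inside a single part. Assume $G$ is complete multipartite on $V_1\sqcup\cdots\sqcup V_t$ with each $|V_j|\le 3$, and take a defining generator $\xb_f-\xb_g$ of $J_G$, with $f,g$ being $2$-colourings of an induced subgraph $G[W]$: if $W$ meets $\ge 3$ parts, $G[W]$ contains a triangle and has no $2$-colouring; if $W$ meets exactly two parts, its two colour classes must be those two intersections, so $\xb_f=\xb_g$; and if $W$ lies in one part then $|W|\le 3$, so $\xb_f,\xb_g$ come from partitions of $W$, and a direct check for $|W|\in\{0,1,2,3\}$ shows every such difference is a combination of the generators $x_{S\setminus\{i\}}x_{\{i\}}-x_Sx_\emptyset$ of $L_G$; hence $J_G\subseteq L_G$. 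For the converse I would argue by contrapositive with two witnesses. If $G$ is not complete multipartite, then it has vertices $u,v,w$ with $\{u,v\}\in E(G)$ and $\{u,w\},\{v,w\}\notin E(G)$, and $h=x_{\{u,w\}}x_{\{v\}}-x_{\{u\}}x_{\{v,w\}}$ (the difference of the two $2$-colourings of $G[\{u,v,w\}]$) lies in $J_G$; if $G$ is complete multipartite with some $|V_j|\ge 4$, pick distinct $a,b,c,d\in V_j$ and take $h=x_{\{a,b\}}x_{\{c,d\}}-x_{\{a,b,c,d\}}x_\emptyset\in J_G$. In each case $h\notin L_G$: applying the substitution $x_\emptyset\mapsto 0$ maps $L_G$ into the monomial ideal generated by the $x_{S\setminus\{i\}}x_{\{i\}}$ ($i\in S\in S(G)$, $|S|\ge 2$), while the image of $h$ is a $\KK$-combination of degree-$2$ monomials none of which is divisible by such a generator --- in the second case because $x_{\{a,b\}}x_{\{c,d\}}$ has no singleton factor, in the first because divisibility would force $\{u,v,w\}$ to be a stable set, which it is not.

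The step I expect to be the main obstacle is the ``$\Leftarrow$'' direction of part~(d), together with the $S\cap T\neq\emptyset$ case in part~(c): these are the only points requiring a genuine, if elementary, combinatorial analysis of which stable-set monomials can occur. Everything else reduces to Lemma~\ref{colongraph}, to a primality/colon-ideal manipulation, or to the structural observation that the binomials $\xb_f-\xb_g$ generating $J_G$ and the generators of $L_G$ have a very restricted shape, so that membership questions for a fixed homogeneous binomial collapse to finite-dimensional linear algebra.
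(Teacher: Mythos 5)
Your proposal is correct and follows essentially the same route as the paper: part (a) via Lemma \ref{colongraph} and the colon ideal, part (c) via the witness $x_{\{i,j\}}x_{\{i,k\}}-x_{\{i,j,k\}}x_{\{i\}}$ plus the $|S|\le 2$ case analysis, and part (d) via the induced-$P_3$-in-$\overline{G}$ and large-part witnesses plus a case analysis of the $2$-colorings of $G[W]$ in a complete multipartite graph. The only (welcome) additions are your explicit justifications of the non-membership claims --- the degree-$2$ span argument for $h\notin J_G$ and the specialization $x_\emptyset\mapsto 0$ for $h\notin L_G$ --- which the paper asserts without proof, and a cosmetically different witness in (d).
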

\begin{proof} 
(a) Let $I \in \{ \langle [I_G]_2 \rangle, J_G, L_G\}$.
Recall that the toric ideal $I_G$ is prime.
Hence $I$ is prime if $I_G = I$.
Conversely, suppose that $I$ is prime.
Since $x^k_{\emptyset} \notin I$ holds for any $k$, it follows that
$I=I : x_{\emptyset}^{\infty}$.
From Lemma \ref{colongraph}, we have
$I=I : x_{\emptyset}^{\infty} =I_G$.

(b) Trivial.

(c)
If $\overline{G}$ has a $3$-cycle $(i_1,i_2,i_3)$, then
$x_{\{i_1,i_2,i_3\}} x_{\{i_1\}} -x_{\{i_1,i_2\}} x_{\{i_1,i_3\}} 
\in \langle [I_G]_2 \rangle \setminus J_G$.
Suppose that 
$\overline{G}$ has no $3$-cycles.
Then $|S| \le 2$ for all $S \in S(G)$.
Let $h= x_{S_1} x_{S_2} - x_{S_3} x_{S_4} \in \langle [I_G]_2 \rangle$.
Since $h$ belongs to $I_G$, $S_1 \cup S_2$ coincides with $S_3 \cup S_4$ as multisets, and in particular,
we have $S_1 \cap S_2 = S_3 \cap S_4$.
Suppose that $S_1 \cap S_2$ is not empty.
Let $i \in S_1 \cap S_2$.
Then $(S_1 \setminus \{i\}) \cup (S_2 \setminus \{i\} )$ coincides with $(S_3 \setminus \{i\}) \cup (S_4 \setminus \{i\} )$
as multisets.
Since $|S_j \setminus \{i\}| \le 1$ for each $j = 1,2,3,4$, it follows that 
$(S_1,S_2)$ is equal to either $(S_3,S_4)$ or $(S_4,S_3)$.
Then $h = 0$, a contradiction.
Thus $S_1 \cap S_2 = \emptyset$, and hence $h \in J_G$.
Therefore we have $\langle [I_G]_2 \rangle \subset J_G$, and hence $\langle [I_G]_2 \rangle = J_G$.

(d)
Suppose that $J_G = L_G$.
If $\overline{G}$ has a path of length two $P_3 = (i,j,k)$ as an induced subgraph, then it follows that 
$x_{\{i,j\}} x_{\{k\}} -x_{\{i\}} x_{\{j,k\}} 
\in J_G \setminus L_G $, a contradiction.
Hence $\overline{G}$ has no $P_3$ as an induced subgraph.
It is known that $\overline{G}$ has no $P_3$ as an induced subgraph if and only if $G$ is a complete multipartite graph.
Suppose that $G$ has a part $V_\alpha$ with $|V_\alpha|\ge 4$.
It then follows that $x_{\{i,j\}} x_{\{k,l\}} -x_{\{i,k\}} x_{\{j,l\}} 
\in J_G \setminus L_G $ where $i,j,k,l$ are distinct vertices in $V_\alpha$, a contradiction.
Hence $G$ is a complete multipartite graph on the vertex set
$V_1 \sqcup \cdots \sqcup V_t$ with $|V_j| \le 3$ for each $j$.

Suppose that $G$ is a complete multipartite graph on the vertex set
$V_1 \sqcup \cdots \sqcup V_t$ with $|V_j| \le 3$ for each $j$.
Then each $S \in S(G)$ is a subset of $V_j$ for some $j$.
It is enough to show that $J_G \subset L_G$.
Let $h= x_{S_1} x_{S_2} - x_{S_3} x_{S_4}$ be a nonzero binomial in $J_G$.
Then $S_1 \cap S_2 = S_3 \cap S_4 =\emptyset$ and $S_1 \cup S_2 = S_3 \cup S_4$.
Suppose that $S_i \subset V_{j_i}$ for each $i = 1,2,3,4$.

\bigskip

\noindent
{\bf Case 1.}
($S_1 = \emptyset$.)
Then $h = x_\emptyset x_{S_2} - x_{S_3} x_{S_4} $, where $|S_3|, |S_4| \ge 1$ and $ |S_3| + |S_4|=|S_2| \le 3$.
Since either $|S_3|$ or $|S_4|$ equals one, $h \in L_G$.

\bigskip

\noindent
{\bf Case 2.}
($S_j \ne \emptyset$ for each $i$.)
Since $S_1 \cup S_2 = S_3 \cup S_4$, we may assume that $j_1 = j_3$ and $j_2 = j_4$.
If $j_1 \ne j_2$, then $S_1 = S_3$ and $S_2 = S_4$, and hence $h=0$.
This is a contradiction.
Thus we have $j_1 = j_2 = j_3 = j_4$.
%
Since $2 \le |S_1| + |S_2| = |S_3| + |S_4| \le 3$, we may assume that $|S_1| = |S_3|=1$.
Then $h = (x_{S_1} x_{S_2} - x_\emptyset x_{S_1 \cup S_2}) + (x_\emptyset x_{S_3 \cup S_4} - x_{S_3} x_{S_4})$ belongs to $L_G$.
\end{proof}

Now, we return to the inclusions (\ref{ineq}).
If $G$ is a complete multipartite graph, then $I_G$ is generated by quadratic binomials (\cite[Theorem 3.1]{OST}).  Hence from Proposition \ref{prime} we know that $L_G=J_G$ implies $\langle [I_G]_2 \rangle = I_G$.
Therefore, there are the following $6$ cases:
\begin{enumerate}[(i)]
    \item $L_G = J_G = \langle [I_G]_2 \rangle = I_G$.
     \item $L_G = J_G \subsetneq \langle [I_G]_2 \rangle = I_G$.
    \item $L_G \subsetneq J_G = \langle [I_G]_2 \rangle = I_G$.
    \item $L_G \subsetneq J_G = \langle [I_G]_2 \rangle \subsetneq I_G$.
    \item $L_G \subsetneq J_G \subsetneq \langle [I_G]_2 \rangle = I_G$.
    \item $L_G \subsetneq J_G \subsetneq \langle [I_G]_2 \rangle \subsetneq I_G$.
\end{enumerate}
We give an example for each case.
\begin{Example}
First, we recall that for any bipartite graph $G$, $I_G$ is generated by quadratic binomials.

A complete graph $K_d$ of $d$ vertices satisfies the condition (i). In fact, one has $I_{K_d}=\{0\}$.
For a $(3,3)$-complete bipartite graph $K_{3,3}$, since $\overline{K_{3,3}}$ has a $3$-cycle, the condition (ii) holds.
For a path of length three $P_4$, since $\overline{P_4}$ is also a path of length three, the condition (iii) holds.
For $G=\overline{C_6}$, $I_{G}$ is not generated by quadratic binomials from \cite[Proposition 11]{MOS}. Since $\overline{G}=C_6$ has no $3$-cycles and $G$ is not a complete multipartite graph, the condition (iv) is satisfied.
For a $6$-cycle $C_6$, 
since $\overline{C_6}$ has a $3$-cycle and $C_6$ is not a complete multipartite graph,
the condition (v) holds.
Finally, we consider the following graph $G$:
	\begin{center}
		\begin{tikzpicture}
\node[draw, shape=circle] (i1) at (0,2) {}; 
\node[draw, shape=circle] (k1) at (2,1) {}; 
\node[draw, shape=circle] (j1) at (0,0) {}; 
\node[draw, shape=circle] (i2s) at (10,2) {}; 
\node[draw, shape=circle] (k2u) at (8,1) {}; 
\node[draw, shape=circle] (j2t) at (10,0) {}; 
\node[draw, shape=circle] (i2) at (4,2) {};
\node[draw, shape=circle] (i2s1) at (6,2) {}; 
\node[draw, shape=circle] (j2) at (4,0) {};
\node[draw, shape=circle] (j2t1) at (6,0) {}; 
 \node[draw, shape=circle] (k2) at (4,1) {};
\node[draw, shape=circle] (k2u1) at (6,1) {};

\draw (i1)--(i2);
\draw (j1)--(j2);
\draw (k1)--(k2);
\draw (i1)--(j1);
\draw (i1)--(k1);
\draw (j1)--(k1);
\draw (i2s)--(j2t);
\draw (i2s)--(k2u);
\draw (j2t)--(k2u);
\draw (i2s1)--(i2s);
\draw (j2t1)--(j2t);
\draw (k2u1)--(k2u);

\draw (i2s1)--(i2);
\draw (j2t1)--(j2);
\draw (k2u1)--(k2);
\end{tikzpicture}
	\end{center}
 It then follows from \cite[Theorem 1.7]{OST} that $I_G$ is not generated by quadratic binomials. Since $\overline{G}$ has a $3$-cycle and $G$ is not a complete multipartite graph, the condition (vi) holds.
\end{Example}

\section{Examining Kempe equivalence}
\label{sec:kempe}
In this section, we prove Theorem \ref{thm:K_equiv} and Proposition \ref{prop:exam_kempe}.
In fact, we show the following.

\begin{Theorem} \label{ExamKempe}
    Let $G$ be a graph on $[d]$ and let $f$ and $g$ be $k$-colorings of an induced subgraph of $G$.
    Then the following conditions are equivalent{\rm :}
    \begin{enumerate}
        \item[{\rm (i)}] $f \sim_k g${\rm ;}
        \item[{\rm (ii)}] $\xb_f - \xb_g \in \langle [I_G]_2 \rangle${\rm ;}
        \item[{\rm (iii)}] $\xb_f - \xb_g \in J_G$.
    \end{enumerate}
\end{Theorem}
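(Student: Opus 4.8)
The plan is to prove the cycle of implications $\mathrm{(iii)}\Rightarrow\mathrm{(ii)}\Rightarrow\mathrm{(i)}\Rightarrow\mathrm{(iii)}$. Here $\mathrm{(iii)}\Rightarrow\mathrm{(ii)}$ is immediate from the inclusion $J_G\subseteq\langle[I_G]_2\rangle$ recorded in \eqref{ineq}.

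For $\mathrm{(i)}\Rightarrow\mathrm{(iii)}$ we induct on the length of a Kempe sequence, so it suffices to treat the case where $g$ is obtained from $f$ by a single Kempe switching of the colors $i$ and $j$ along a connected component $H$ of the subgraph of $G$ induced on $A\cup B$, where $A=f^{-1}(i)$ and $B=f^{-1}(j)$. Setting $A_0=A\cap V(H)$, $B_0=B\cap V(H)$, $A^*=(A\setminus A_0)\cup B_0$ and $B^*=(B\setminus B_0)\cup A_0$, one computes
\[
\xb_f-\xb_g=\Big(\prod_{\ell\ne i,j}x_{f^{-1}(\ell)}\Big)\big(x_Ax_B-x_{A^*}x_{B^*}\big).
\]
Because $H$ is a connected component of $G[A\cup B]$, there is no edge between $B_0$ and $A\setminus A_0$, nor between $A_0$ and $B\setminus B_0$, so $A^*$ and $B^*$ are stable sets of $G$; hence $x_Ax_B-x_{A^*}x_{B^*}=\xb_h-\xb_{h'}$, where $h$ and $h'$ are the $2$-colorings of the induced subgraph $G[A\cup B]$ given by the partitions $(A,B)$ and $(A^*,B^*)$. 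Therefore $\xb_f-\xb_g\in J_G$.

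The core of the argument is $\mathrm{(ii)}\Rightarrow\mathrm{(i)}$. First we invoke the standard description of membership in a pure-difference binomial ideal: since $\langle[I_G]_2\rangle$ is generated by the binomials $x_{T_1}x_{T_2}-x_{T_1'}x_{T_2'}$ with $T_1,T_2,T_1',T_2'\in S(G)$ and $T_1\sqcup T_2=T_1'\sqcup T_2'$ as multisets, a difference of two monomials $m,m'$ of degree $k$ lies in $\langle[I_G]_2\rangle$ if and only if $m$ and $m'$ are joined by a chain of \emph{quadratic moves}, each of which replaces a factor $x_{T_1}x_{T_2}$ of a monomial by $x_{T_1'}x_{T_2'}$. (The $\KK$-span of all differences of monomials joined by such moves is an ideal; it contains the generators of $\langle[I_G]_2\rangle$ and is contained in $\langle[I_G]_2\rangle$, hence equals it, and the resulting equivalence classes of monomials form a $\KK$-basis of the quotient.) Now write $f$ and $g$ as $k$-colorings of $G[W]$ with $W\subseteq[d]$. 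Quadratic moves preserve $\pi$, and $\pi(\xb_f)=\pi(\xb_g)=\tb^{\rho(W)}s^k$; hence every monomial $m_t$ occurring in a chain $\xb_f=m_0,m_1,\dots,m_r=\xb_g$ satisfies $\pi(m_t)=\tb^{\rho(W)}s^k$, which forces the $k$ stable sets appearing in $m_t$ to be pairwise disjoint with union $W$, so that $m_t=\xb_{F_t}$ for a $k$-coloring $F_t$ of $G[W]$ (with $F_0=f$ and $F_r=g$). It then remains to show that a single quadratic move $\xb_{F_t}=w\,x_{T_1}x_{T_2}\to w\,x_{T_1'}x_{T_2'}=\xb_{F_{t+1}}$ forces $F_t\sim_k F_{t+1}$: here $T_1,T_2$ are two (disjoint) color classes of $F_t$, so $(T_1',T_2')$ is again a partition of $X:=T_1\cup T_2$ into two stable sets, and the two $2$-colorings $(T_1,T_2)$ and $(T_1',T_2')$ of $G[X]$ differ only on a union of connected components of $G[X]$; recoloring these components one at a time is a sequence of Kempe switchings on the pair of colors whose $F_t$-classes are $T_1$ and $T_2$, carrying $F_t$ to $F_{t+1}$, since the union of those two color classes remains equal to $X$ at each step. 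Concatenating the chain gives $f=F_0\sim_k F_1\sim_k\cdots\sim_k F_r=g$.

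The step we expect to be the main obstacle is this last one: one must check that a chain of quadratic moves stays within the set of monomials attached to $k$-colorings of the \emph{fixed} induced subgraph $G[W]$ — which is exactly where the constancy of the $\pi$-degree is used, so that no genuine replication graph enters — and that a single quadratic exchange of stable sets unwinds into an honest sequence of Kempe switchings. The remaining details are routine. Finally, Theorem~\ref{thm:K_equiv} and Proposition~\ref{prop:exam_kempe} are the special case $W=[d]$, read off from $\mathrm{(i)}\Leftrightarrow\mathrm{(iii)}$ and $\mathrm{(i)}\Leftrightarrow\mathrm{(ii)}$ respectively.
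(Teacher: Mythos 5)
Your proof is correct and follows essentially the same route as the paper's: (iii)$\Rightarrow$(ii) from the inclusion $J_G\subset\langle[I_G]_2\rangle$, (i)$\Rightarrow$(iii) by factoring out the two color classes involved in a single Kempe switching, and (ii)$\Rightarrow$(i) by decomposing ideal membership into elementary quadratic moves and realizing each move as a component-by-component recoloring. The only organizational difference is that you prove and use the chain characterization of membership in a pure-difference binomial ideal together with the invariance of $\pi$ to keep every intermediate monomial attached to a $k$-coloring of the fixed subgraph $G[W]$, whereas the paper takes the telescoping-sum expression from \cite[Lemma 3.8]{HHO} and runs a minimal-counterexample induction on its length.
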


\begin{proof}
Suppose that $f$ and $g$ are $k$-colorings of an induced subgraph $G_0$ of $G$.
Since $J_G \subset  \langle [I_G]_2 \rangle $ holds, we have (iii) $\Rightarrow$ (ii).

(i) $\Rightarrow$ (iii).
Suppose that $f \sim_k g$
and $\xb_f - \xb_g \notin J_G$.
Let $f_0,f_1,\ldots,f_s$ be a sequence of $k$-colorings of $G_0$ such that $f_0=f$, $f_s=g$,
and $f_i$ is obtained from $f_{i-1}$ by a Kempe switching.
We may assume that $s \ge 1$ is minimal among 
all $k$-colorings $f$ and $g$ such that
$f \sim_k g$
and $\xb_f - \xb_g \notin J_G$.
Suppose that the Kempe switching from $f$ to $f_1$ is obtained by
a connected component $H$ of the induced subgraph 
$G_0 [f^{-1}(\mu) \cup f^{-1}(\eta)]$ by setting
$$
f_1(x) =
\begin{cases}
    f(x) & x \notin H,\\
    \mu & x \in H \mbox{ and } f(x) =\eta,\\
    \eta & x \in H \mbox{ and } f(x) =\mu.\\
\end{cases}
$$
Let $f' = f|_{G'}$ and $f_1' = f_1|_{G'}$ be the restrictions of  $f$ and $f_1$ to $G':= G_0  [f^{-1}(\mu) \cup f^{-1}(\eta)]$, respectively.
Since $f'$ and $f_1'$ are 2-colorings of ${G'}$, 
$\xb_{f'} - \xb_{f_1'}$ belongs to $J_G$.
Then 
$$
\xb_f -\xb_g = \frac{\xb_f}{\xb_{f'}} 
(\xb_{f'} - \xb_{f_1'} )
+ \left(\frac{\xb_f}{\xb_{f'}} 
 \xb_{f_1'} - \xb_g\right)
 =
 \frac{\xb_f}{\xb_{f'}} 
(\xb_{f'} - \xb_{f_1'} )+
\xb_{f_1} - \xb_{g}
. $$ 
If $s=1$, then $f_1 = g$ and hence 
$$
\xb_f -\xb_g = 
 \frac{\xb_f}{\xb_{f'}} 
(\xb_{f'} - \xb_{f_1'} ) \in J_G
. $$ 
We may assume that $s \ge 2$.
By the hypothesis on $s$, $\xb_{f_1} - \xb_{g}$ belongs to $J_G$.
Hence $\xb_f -\xb_g$ belongs to $J_G$, a contradiction.

(ii) $\Rightarrow$ (i).
Suppose that $f \not\sim_k g$
and $\xb_f - \xb_g \in \langle [I_G]_2 \rangle$.
Since $\langle [I_G]_2 \rangle$ is a binomial ideal, it then follows from \cite[Lemma 3.8]{HHO} that
there exists an expression
\begin{equation}
\xb_f - \xb_g  = \sum_{r=1}^s {\bf x}_{w_r} (\xb_{f_r}- \xb_{g_r}),
\label{tenkai}
\end{equation}
where 
$f_r$ and $g_r$ are 2-colorings of $G_{\ab_r}$ with $\ab_r \in \ZZ_{\ge 0}^d$ for each $r$.
We may assume that $s \ge 1$ is minimal among such $f$ and $g$
with 
$f \not\sim_k g$
and $\xb_f - \xb_g \in \langle [I_G]_2 \rangle$.
Since $\xb_f$ must appear in the right-hand side of (\ref{tenkai}), we may assume that
$\xb_f = \xb_{w_1} \xb_{f_1}$.
Then $\xb_f$ is divided by $\xb_{f_1}$,
and $f_1$ is the restriction of $f$
to $G' = G_0 [f^{-1} (\mu) \cup f^{-1} (\nu)]$
for some $\mu$ and $\nu$.
Since $G'$ has a 2-coloring, it is a bipartite graph.
Then $f_1$ and $g_1$ are Kempe equivalent.
Let $f'$ be a coloring of $G_0$ defined by
$$
f'(x) =
\begin{cases}
    g_1(x) &  f(x) \in \{\mu, \nu\},\\
         f(x) &\mbox{otherwise.}
\end{cases}
$$
Then one obtains $f \sim_k f'$.
Moreover, we have
\begin{equation}
\xb_f -{\bf x}_{w_1} (\xb_{f_1}- \xb_{g_1}) = \xb_{f'}.
\end{equation}
If $s=1$, then 
\[
\xb_g = 
\xb_f -{\bf x}_{w_1} (\xb_{f_1}- \xb_{g_1}) = \xb_{f'},
\]
and hence $f \sim_k  f' = g$, a contradiction.
We may assume that $s\ge 2$.
Since
\[
\xb_{f'} - \xb_g
=
\xb_f -{\bf x}_{w_1} (\xb_{f_1}- \xb_{g_1})  - \xb_g
=
\sum_{r=2}^s {\bf x}_{w_r} (\xb_{f_r}- \xb_{g_r}) \in \langle [I_G]_2 \rangle,
\]
$f' \sim_k g$ by the hypothesis on $s$.
Thus $f \sim_k f' \sim_k g$, a contradiction.
\end{proof}

We see examples of Theorem \ref{ExamKempe}.
\begin{Example}
    {\rm 
    Let $G$ be the graph as follows:
    			\begin{center}
		\begin{tikzpicture}
\node[draw, shape=circle] (1) at (0,3) {$1$};	
\node[draw, shape=circle] (2) at (-1.5,0) {$2$}; 
\node[draw, shape=circle] (3) at (1.5,0) {$3$}; 
\node[draw, shape=circle] (4) at (0,2) {$4$}; 
\node[draw, shape=circle] (5) at (-0.7,0.7) {$5$}; 
\node[draw, shape=circle] (6) at (0.7,0.7) {$6$}; 

\draw (1)--(2);
\draw (2)--(3);
\draw (1)--(4);
\draw (2)--(5);
\draw (3)--(6);
\draw (4)--(5);
\draw (6)--(4);
\draw (5)--(6);
\end{tikzpicture}
	\end{center}
We consider two $3$-colorings $f$ and $g$ of $G$ defined by
\[
f(i)=\begin{cases}
    1 & i \in \{1,5\},\\
    2 & i \in \{2,6\},\\
    3 & i \in \{3,4\}
\end{cases}
\mbox{ and }
g(i)=\begin{cases}
    1 & i \in \{1,3,5\},\\
    2 & i \in \{2,6\},\\
    3 & i =4.
\end{cases}
\]
Then one has \[\xb_f-\xb_g=x_{\{1,5\}} x_{\{2,6\}} x_{\{3,4\}}-x_{\{1,3,5\}} x_{\{2,6\}} x_{\{4\}}=x_{\{2,6\}} (x_{\{1,5\}}  x_{\{3,4\}}-x_{\{1,3,5\}} x_{\{4\}}).\]
    Since $x_{\{1,5\}} x_{\{3,4\}}-x_{\{1,3,5\}} x_{\{4\}} \in J_G$, it then follows from Theorem \ref{ExamKempe} that $f \sim_3 g$.
   }
\end{Example}
\begin{Example}
\label{ex:nonkempe}
    {\rm 
    Let $G$ be the graph as follows:
    			\begin{center}
		\begin{tikzpicture}
\node[draw, shape=circle] (1) at (0,3) {$1$};	
\node[draw, shape=circle] (2) at (-1.5,0) {$2$}; 
\node[draw, shape=circle] (3) at (1.5,0) {$3$}; 
\node[draw, shape=circle] (4) at (0,2) {$4$}; 
\node[draw, shape=circle] (5) at (-0.7,0.7) {$5$}; 
\node[draw, shape=circle] (6) at (0.7,0.7) {$6$}; 

\draw (1)--(2);
\draw (2)--(3);
\draw (1)--(3);
\draw (1)--(4);
\draw (2)--(5);
\draw (3)--(6);
\draw (4)--(5);
\draw (6)--(4);
\draw (5)--(6);
\end{tikzpicture}
	\end{center}
We consider two $3$-colorings $f$ and $g$ of $G$ defined by
\[
f(i)=\begin{cases}
    1 & i \in \{1,5\},\\
    2 & i \in \{2,6\},\\
    3 & i \in \{3,4\}
\end{cases}
\mbox{ and }
g(i)=\begin{cases}
    1 & i \in \{1,6\},\\
    2 & i \in \{2,4\},\\
    3 & i \in \{3,5\}.
\end{cases}
\]
Then one has \[\xb_f-\xb_g=x_{\{1,5\}} x_{\{2,6\}} x_{\{3,4\}}-x_{\{1,6\}} x_{\{2,4\}} x_{\{3,5\}}.\]
    Since $\xb_f-\xb_g \notin J_G$, it then follows from Theorem \ref{ExamKempe} that $f \not\sim_3 g$.
    }
\end{Example}

 See Section \ref{sect:algo} for an algorithm to  determine if $f \sim_k g$ by using the techniques on Gr\"{o}bner bases.

\section{Computing Kempe classes via Kempe ideals}
\label{sect:k-class}
In this section, we give a way to find all $k$-colorings of a graph $G$ up to Kempe equivalence via commutative algebra.
In particular, we prove Theorem \ref{thm:K-class}.
For a graph $G$ on $[d]$, we define the ideal $K_G$ of $R$ by
\[
K_G:=J_G + M_G \subset R[G],
\]
where 
\[
M_G := \langle x_Sx_T \mid S,T \in S(G), \ S \cap T \neq \emptyset \rangle \subset R[G].
\]
The ideal $K_G$ is called the \textit{Kempe ideal} of $G$.

The set of all monomials in $M_G$ consists of all monomials associated with $k$-colorings of $G_{\ab}$ with $\ab \in \ZZ_{\geq 0}^d$ such that $G_{\ab}$ is not an induced subgraph of $G$.  
\begin{Lemma} \label{not in M}
Let $G$ be a graph on $[d]$.
Then $\xb_f =x_{S_1} x_{S_2} \cdots x_{S_k} \notin M_G$ if and only if $f$ is a $k$-coloring of an induced subgraph of $G$. 
\end{Lemma}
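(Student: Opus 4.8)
The plan is to translate membership in the monomial ideal $M_G$ into a disjointness condition on the stable sets occurring in $\xb_f$, and then to read off both directions from the way $\xb_f$ encodes a coloring of a graph $G_\ab$.

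First I would record the elementary fact that every generator $x_S x_T$ of $M_G$ has $S \cap T \neq \emptyset$, so both $S$ and $T$ are nonempty; in particular no generator of $M_G$ involves the variable $x_\emptyset$. Listing the factors of $\xb_f$ with multiplicity as $x_{S_1}\cdots x_{S_k}$, this shows that $\xb_f \in M_G$ precisely when there are indices $i \neq j$ with $S_i \cap S_j \neq \emptyset$ (allowing $S_i = S_j \neq \emptyset$, which is what accounts for the generators $x_S^2$).

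Next I would recall, from the discussion preceding Proposition \ref{prop:gen}, that if $f$ is a $k$-coloring of $G_\ab$ with $\ab=(a_1,\dots,a_d)\in\ZZ_{\geq 0}^d$, then $\xb_f = x_{S_1}\cdots x_{S_k}$ with $S_\ell = \{\,j\in[d] \mid G^{(j)}\cap f^{-1}(\ell)\neq\emptyset\,\}$ and $a_p = |\{\ell : p \in S_\ell\}|$, the latter because the $a_p$ pairwise adjacent vertices of the clique $G^{(p)}$ receive $a_p$ distinct colors under $f$. Consequently $S_1,\dots,S_k$ are pairwise disjoint if and only if $a_p \leq 1$ for every $p \in [d]$, i.e.\ $\ab \in \{0,1\}^d$, which is exactly the condition that $G_\ab$ be an induced subgraph of $G$.

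Combining these two observations then yields the lemma. If $\xb_f \notin M_G$, the sets $S_\ell$ are pairwise disjoint, so $\ab \in \{0,1\}^d$ and $G_\ab$ is an induced subgraph of $G$. Conversely, if $f$ is a $k$-coloring of an induced subgraph $G[W]$ of $G$, then $\xb_f = \prod_{\ell} x_{f^{-1}(\ell)}$ is a product of pairwise disjoint stable sets of $G$ among which only $\emptyset$ can repeat, so no generator $x_S x_T$ of $M_G$ divides $\xb_f$ --- such $S,T$ would have to be color classes, necessarily distinct since they are nonempty while only $\emptyset$ repeats, forcing the contradiction $S\cap T=\emptyset$ --- whence $\xb_f \notin M_G$. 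I do not expect a genuine obstacle here: the argument is merely a dictionary between monomial divisibility and the combinatorics of colorings. The one point that repays a little care is the empty stable set, since $\xb_f$ may carry $x_\emptyset$ to a power greater than one when $f$ omits some colors; this is harmless precisely because $\emptyset \cap \emptyset = \emptyset$, so $M_G$ has no generator involving $x_\emptyset$.
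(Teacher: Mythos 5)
Your proposal is correct and follows essentially the same route as the paper: both directions come down to observing that divisibility of $\xb_f$ by a generator $x_S x_T$ of the monomial ideal $M_G$ is equivalent to some vertex lying in two of the sets $S_1,\dots,S_k$, which in turn is equivalent to some coordinate of $\ab$ being at least $2$, i.e.\ to $G_\ab$ failing to be an induced subgraph. Your extra care about the variable $x_\emptyset$ and the generators $x_S^2$ is a welcome but inessential refinement of the same argument.
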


\begin{proof}
Suppose that $f$ is not a $k$-coloring of any induced subgraph of $G$.
Then $f$ is a $k$-coloring of $G_{\ab}$ with $\ab \in \ZZ_{\geq 0}^d$ such that $a_j \geq 2$ for some $j$. Hence there exist integers $1 \leq a < b \leq k$ with $i \in S_a$ and $i \in S_b$. 
This implies that $\xb_f$ can be divided by a monomial $x_{S_a} x_{S_b} \in M_G$,
and hence $\xb_f \in M_G$.

Suppose that $\xb_f \in M_G$.
There exist $S,T \in S(G)$ such that $S \cap T \neq \emptyset$ and $x_S x_T$ divides $\xb_f$.
Let $i \in S \cap T$.
Then $i$ appears at least twice in $S_1 \cup \dots \cup S_k$.
Thus $f$ is not a $k$-coloring of any induced subgraph of $G$.
\end{proof}

As an easy consequence of this lemma, we have the following.
\begin{Corollary}
    Let $G$ be a graph on $[d]$. Then one has
    \[
    K_G=\langle [I_G]_2 \rangle + M_G.
    \]
\end{Corollary}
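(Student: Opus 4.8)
The plan is to prove the two inclusions $K_G \subseteq \langle [I_G]_2\rangle + M_G$ and $\langle [I_G]_2\rangle + M_G \subseteq K_G$ separately. Since $K_G = J_G + M_G$ and $J_G \subseteq \langle [I_G]_2\rangle$ by (\ref{ineq}), the first inclusion is immediate. For the second, it suffices to show $\langle [I_G]_2\rangle \subseteq K_G$, because then $\langle [I_G]_2\rangle + M_G \subseteq K_G + M_G = K_G$.

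To prove $\langle [I_G]_2\rangle \subseteq K_G$, I would argue on generators: every quadratic binomial of $I_G$ has the form $h = x_{S_1}x_{S_2} - x_{S_3}x_{S_4}$ with $S_1,S_2,S_3,S_4 \in S(G)$, and, as recalled in Section~\ref{sect:stable}, $h \in I_G$ if and only if $S_1 \cup S_2 = S_3 \cup S_4$ as multisets; in particular $S_1 \cap S_2 = S_3 \cap S_4$. Then split into two cases. If $S_1 \cap S_2 \neq \emptyset$, then both $x_{S_1}x_{S_2}$ and $x_{S_3}x_{S_4}$ lie in $M_G$ by definition, so $h \in M_G \subseteq K_G$. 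If $S_1 \cap S_2 = \emptyset$, then also $S_3 \cap S_4 = \emptyset$, and the multiset equality becomes the set equality $S_1 \cup S_2 = S_3 \cup S_4$; hence $x_{S_1}x_{S_2} = \xb_f$ and $x_{S_3}x_{S_4} = \xb_g$, where $f$ and $g$ are the $2$-colorings of the induced subgraph $G[S_1 \cup S_2]$ with color classes $\{S_1,S_2\}$ and $\{S_3,S_4\}$, respectively, so $h = \xb_f - \xb_g \in J_G \subseteq K_G$. Thus every quadratic binomial of $I_G$ lies in $K_G$, which gives $\langle [I_G]_2\rangle \subseteq K_G$ and completes the argument.

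There is no genuine obstacle here: the statement follows directly from the combinatorial description of $M_G$ underlying Lemma~\ref{not in M} together with the multiset characterization of membership in $I_G$. The only point requiring a moment of care is the observation that $S_1 \cup S_2 = S_3 \cup S_4$ as multisets forces $S_1 \cap S_2 = S_3 \cap S_4$, which guarantees that the two monomials of a quadratic binomial in $I_G$ are simultaneously in $M_G$ or simultaneously not in $M_G$; the case split above is then clean.
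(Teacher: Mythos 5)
Your proof is correct and follows essentially the same route as the paper, which leaves this as ``an easy consequence'' of Lemma~\ref{not in M}: your case split on whether $S_1 \cap S_2$ is empty is exactly the dichotomy between $G_{\ab}$ being an induced subgraph (giving a generator of $J_G$) and $\xb_f, \xb_g$ both lying in $M_G$. The one point you make explicit --- that the multiset equality $S_1 \cup S_2 = S_3 \cup S_4$ forces $S_1 \cap S_2 = S_3 \cap S_4$, so the two monomials are simultaneously in or out of $M_G$ --- is the same observation the paper records separately as Lemma~\ref{belongs to M}.
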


The following lemma is useful to check if a homogeneous binomial in $I_G$ belongs to $M_G$.

\begin{Lemma} \label{belongs to M}
Let $G$ be a graph on $[d]$ and let  $\xb_f - \xb_g \in I_G$.
Then the following conditions are equivalent{\rm :}
\begin{itemize}
    \item[{\rm (i)}]
    $\xb_f - \xb_g \in M_G${\rm ;}
\item[{\rm (ii)}]
both $\xb_f$ and $\xb_g$ belong to $M_G${\rm ;}
\item[{\rm (iii)}]
at least one of $\xb_f$ and $\xb_g$ belongs to $M_G$.
    \end{itemize}
\end{Lemma}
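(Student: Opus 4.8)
The plan is to prove $(i) \Leftrightarrow (ii)$ together with $(iii) \Rightarrow (ii)$; since $(ii) \Rightarrow (iii)$ is trivial, this yields the equivalence of all three conditions. Throughout we may restrict to the case $\xb_f \neq \xb_g$ (if $\xb_f = \xb_g$ the binomial is $0$, so only the membership of $\xb_f$ matters, and the statement is read with $f\neq g$ understood).

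For $(i) \Leftrightarrow (ii)$, the only point is that $M_G$ is a monomial ideal: a polynomial lies in $M_G$ if and only if every monomial in its support lies in $M_G$. Since the support of $\xb_f - \xb_g$ is $\{\xb_f,\xb_g\}$, the binomial $\xb_f - \xb_g$ lies in $M_G$ exactly when both $\xb_f$ and $\xb_g$ do. (In particular $(ii)\Rightarrow(i)$ is immediate as well, $M_G$ being an ideal.)

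The substance is $(iii) \Rightarrow (ii)$. Assume without loss of generality that $\xb_f \in M_G$, and write $\xb_f = x_{S_1}\cdots x_{S_k}$ and $\xb_g = x_{T_1}\cdots x_{T_k}$ with $S_\ell, T_\ell \in S(G)$. Because $\xb_f - \xb_g \in I_G$, the multiset $S_1 \cup \cdots \cup S_k$ coincides with the multiset $T_1 \cup \cdots \cup T_k$ (the multiset description of the toric ideal $I_G$ recalled in Section \ref{sect:stable}); hence for every vertex $p \in [d]$ the multiplicity $a_p := |\{\ell : p \in S_\ell\}|$ equals $|\{\ell : p \in T_\ell\}|$. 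Thus $f$ and $g$ are $k$-colorings of one and the same graph $G_\ab$ with $\ab = (a_1,\dots,a_d)$. By Lemma \ref{not in M}, $\xb_f \in M_G$ forces $G_\ab$ not to be an induced subgraph of $G$, i.e.\ $a_j \ge 2$ for some $j$; applying Lemma \ref{not in M} to $g$, whose associated multiplicity vector is the same $\ab$, we conclude $\xb_g \in M_G$. This proves $(ii)$.

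I do not expect a genuine obstacle: the only thing requiring care is the observation that membership of $\xb_f-\xb_g$ in $I_G$ pins down a common multiplicity vector $\ab$ for $f$ and $g$, after which Lemma \ref{not in M} converts the question ``$\xb_f \in M_G$?'' into a property of $\ab$ alone, which is manifestly symmetric in $f$ and $g$. The rest is the standard monomial-ideal bookkeeping of the first step.
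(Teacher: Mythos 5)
Your proposal is correct and follows essentially the same route as the paper: the monomial-ideal observation handles (i) $\Leftrightarrow$ (ii), and (iii) $\Rightarrow$ (ii) rests on the fact that $\xb_f-\xb_g\in I_G$ forces $f$ and $g$ to share the same multiplicity vector $\ab$, so membership in $M_G$ (a repeated vertex, i.e.\ some $a_j\ge 2$) is symmetric in $f$ and $g$. The paper phrases this directly via $t_i^2$ dividing $\pi(\xb_f)=\pi(\xb_g)$ rather than citing Lemma \ref{not in M}, but the substance is identical.
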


\begin{proof}
First, (ii) $\Longrightarrow$ (iii) is trivial.
Since $M_G$ is a monomial ideal, the equivalence (i) $\iff$ (ii) holds.
We show that (iii) $\Longrightarrow$ (ii).
    Suppose that $\xb_f \in M_G$, i.e., $\xb_f$  is divided by $x_S x_T$ where $S, T \in S(G)$ and $S \cap T \neq \emptyset$.
    Let $i \in S \cap T$.
    Then $\pi(\xb_f)$ is divided by $t_i^2$ since $\xb_f$ is divided by $x_S x_T$.
    From $\xb_f - \xb_g \in I_G$, we have $\pi(\xb_f)=\pi(\xb_g)$.
    Hence if $\xb_g=x_{S_1}x_{S_2}\cdots x_{S_k}$, then there exist $1 \leq a < b \leq k$ such that $i \in S_a\cap S_b$. Namely,
    $\xb_g$ is divided by a monomial $x_{S_a} x_{S_b}$ such that $i \in S_a \cap S_b$.
    Thus $\xb_g \in M_G$.
\end{proof}

Next, we discuss a Gr\"{o}bner basis of $K_G$.

\begin{Proposition} \label{kg GB}
Let $G$ be a graph on $[d]$, and
    let $\mathcal{G}_1$ be the reduced Gr\"obner basis of $J_G$ with respect to a monomial order $<$ and set $\mathcal{G}_2 = \{ x_S x_T \mid S, T \in S(G),  \ S \cap T \neq \emptyset\}$.
    Then \[ \mathcal{G}=
    (\mathcal{G}_1 \setminus M_G)\cup \mathcal{G}_2
    \]
    is the reduced Gr\"obner basis of $K_G$ with respect to $<$.
\end{Proposition}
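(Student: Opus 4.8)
The plan is to verify directly that $\mathcal G = (\mathcal G_1 \setminus M_G) \cup \mathcal G_2$ satisfies the defining properties of a reduced Gr\"obner basis of $K_G = J_G + M_G$: namely, that $\mathcal G \subset K_G$, that the initial monomials of elements of $\mathcal G$ generate ${\rm in}_<(K_G)$, and that the reducedness conditions (leading coefficient $1$, no monomial of any element divisible by the initial monomial of another) hold. The set $\mathcal G_2$ consists of quadratic monomials, which are their own initial monomials with coefficient $1$, so the coefficient condition is immediate once we recall that $\mathcal G_1$ is already reduced.

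First I would establish $\langle {\rm in}_<(g) \mid g \in \mathcal G\rangle = {\rm in}_<(K_G)$. The inclusion $\subseteq$ is clear since $\mathcal G \subset K_G$. For $\supseteq$, the key observation is that $K_G = J_G + M_G$ implies ${\rm in}_<(K_G) = {\rm in}_<(J_G) + M_G$: indeed $M_G$ is a monomial ideal, and any element of $K_G$ can be written as $h + m$ with $h \in J_G$, $m \in M_G$, and one checks that its initial monomial lies in ${\rm in}_<(J_G) + M_G$ (this is a standard sum-of-ideals argument, using that the generators of $M_G$ are monomials). Now ${\rm in}_<(J_G) = \langle {\rm in}_<(g) \mid g \in \mathcal G_1\rangle$ since $\mathcal G_1$ is a Gr\"obner basis of $J_G$, and $M_G = \langle \mathcal G_2\rangle$ by definition. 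So it suffices to show that each ${\rm in}_<(g)$ with $g \in \mathcal G_1 \cap M_G$ is already in $\langle \mathcal G_2\rangle$; but if $g = \xb_f - \xb_{f'} \in M_G$, then by Lemma~\ref{belongs to M} both $\xb_f$ and $\xb_{f'}$ lie in $M_G = \langle \mathcal G_2\rangle$, and in particular ${\rm in}_<(g) \in \langle \mathcal G_2\rangle$. Thus discarding the elements of $\mathcal G_1$ that lie in $M_G$ loses nothing at the level of initial ideals, and $\mathcal G$ is a Gr\"obner basis of $K_G$.

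Next I would check reducedness. We need: for every $p \in \mathcal G$, no monomial in the support of $p$ is divisible by ${\rm in}_<(q)$ for any $q \in \mathcal G \setminus \{p\}$. Split into cases by which of $\mathcal G_1 \setminus M_G$ or $\mathcal G_2$ the elements $p, q$ lie in. If $p, q \in \mathcal G_1 \setminus M_G$, the condition is inherited from reducedness of $\mathcal G_1$. If $p = x_S x_T \in \mathcal G_2$ and $q \in \mathcal G \setminus\{p\}$: no monomial from $\mathcal G_2 \setminus \{p\}$ divides $x_S x_T$ (this is the standard fact that the minimal monomial generators $\{x_S x_T : S \cap T \neq \emptyset\}$ form a reduced generating set — one should note that by convention $\mathcal G_2$ is taken to consist of these minimal generators, or equivalently after removing redundancies), and no ${\rm in}_<(q)$ with $q \in \mathcal G_1\setminus M_G$ divides $x_S x_T$, because such a divisor would force ${\rm in}_<(q) \in M_G$, contradicting $q \notin M_G$ (here one uses that $q$ a binomial with initial monomial in the monomial ideal $M_G$ would put $q$ in $M_G$ by Lemma~\ref{belongs to M}, or more directly that ${\rm in}_<(q) \in M_G \iff q \in M_G$ for binomials in $I_G$). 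Finally, if $p = \xb_f - \xb_{f'} \in \mathcal G_1 \setminus M_G$ and $q = x_S x_T \in \mathcal G_2$: we must rule out $x_S x_T$ dividing $\xb_f$ or $\xb_{f'}$; but divisibility by $x_S x_T \in M_G$ would give $\xb_f \in M_G$ (resp. $\xb_{f'} \in M_G$), hence $p \in M_G$ by Lemma~\ref{belongs to M}, contradicting $p \in \mathcal G_1 \setminus M_G$.

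The main obstacle I anticipate is the bookkeeping around $\mathcal G_2$: one must be slightly careful that $\mathcal G_2$ as written — all pairs $x_S x_T$ with $S \cap T \neq \emptyset$ — is not literally reduced (e.g. $x_{\{1,2\}} x_{\{1,3\}}$ is divisible by $x_{\{1,2\}} x_{\{1\}}$ when $\{1,2\},\{1,3\},\{1\}$ are all stable), so strictly speaking one should pass to the minimal generators of $M_G$, or interpret the statement up to the obvious reduction. Modulo this, the argument is a clean application of Lemma~\ref{belongs to M} (to control how binomial initial monomials interact with $M_G$) together with the elementary fact ${\rm in}_<(I+\mathfrak m) = {\rm in}_<(I) + \mathfrak m$ for a monomial ideal $\mathfrak m$.
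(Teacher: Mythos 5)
Your overall strategy---computing ${\rm in}_<(K_G)$ directly and then checking reducedness, rather than verifying Buchberger's criterion on $\mathcal G$ as the paper does---is viable, but the step on which everything rests is not justified. You assert that ${\rm in}_<(K_G)={\rm in}_<(J_G)+M_G$ follows from ``a standard sum-of-ideals argument, using that the generators of $M_G$ are monomials.'' There is no such standard fact: for $I=\langle x-y\rangle$ and the monomial ideal $\mathfrak m=\langle x\rangle$ with $x>y$, one has ${\rm in}_<(I+\mathfrak m)=\langle x,y\rangle$ while ${\rm in}_<(I)+\mathfrak m=\langle x\rangle$. The danger is precisely cancellation: if $h\in J_G$ has ${\rm in}_<(h)\in M_G$, adding a suitable $m\in M_G$ can cancel the leading term and expose a lower monomial of $h$ that need not lie in ${\rm in}_<(J_G)+M_G$. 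Ruling this out is the real content of the proposition, and it is where Lemma~\ref{belongs to M} has to do its main work, not merely the cosmetic job of discarding $\mathcal G_1\cap M_G$. The repair is as follows: write $F=h+m\in K_G$ and expand $h\in J_G$ as $\sum_\ell d_\ell(u_\ell-v_\ell)$, where each $u_\ell-v_\ell$ is a monomial multiple of a quadratic generator of $J_G$ and hence a binomial of $I_G$; by Lemma~\ref{belongs to M} either both or neither of $u_\ell,v_\ell$ lie in $M_G$, so $F=h'+m'$ with $h'\in J_G$ having no monomial in $M_G$ and every monomial of $m'$ lying in $M_G$. No cancellation can occur between $h'$ and $m'$, so ${\rm in}_<(F)$ lies in ${\rm in}_<(J_G)\cup M_G$. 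With this inserted, the rest of your argument (absorbing ${\rm in}_<(g)$ for $g\in\mathcal G_1\cap M_G$ into $\langle\mathcal G_2\rangle$, and the reducedness checks via Lemma~\ref{belongs to M}) goes through; the paper sidesteps the issue entirely by checking that all S-polynomials of $(\mathcal G_1\setminus M_G)\cup\mathcal G_2$ reduce to zero.

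A smaller point: your anticipated obstacle that $\mathcal G_2$ is ``not literally reduced'' is unfounded. Every element of $\mathcal G_2$ is a monomial of degree two in $R[G]$, and a degree-two monomial divides another degree-two monomial only if they are equal; in your example, $x_{\{1,2\}}x_{\{1\}}$ does not divide $x_{\{1,2\}}x_{\{1,3\}}$ because $x_{\{1\}}$ and $x_{\{1,3\}}$ are distinct variables of $R[G]$. So $\mathcal G_2$ is already the minimal set of monomial generators of $M_G$ and no passage to minimal generators is needed.
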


\begin{proof}
    Since $\mathcal{G}_1$ is a Gr\"obner basis of $J_G$, we have $J_G=\langle \mathcal{G}_1 \rangle$.
    It follows from $M_G= \langle \mathcal{G}_2 \rangle$ that $\Gc$ is a set of generators of $K_G = J_G + M_G$.
    We apply Buchberger's Criterion to $\Gc$.
    If $p, q \in \mathcal{G}_1 \setminus M_G$, then the remainder of $S(p,q)$ on division by $\Gc$ is $0$ since $\mathcal{G}_1$ is a Gr\"obner basis of $J_G$.
    If $p, q \in \mathcal{G}_2$, then both $p$ and $q$ are monomials,
    and hence $S(p,q)=0$.
    Suppose that $p \in \mathcal{G}_2$ and $q \in \mathcal{G}_1 \setminus M_G$.
    Let $p = x_S x_T$ with $S,T \in S(G)$ and $S \cap T \neq \emptyset$, and let $q = \xb_f - \xb_g$ where $f$ and $g$ are $k$-colorings of $G_{\ab}$ with $\ab \in \ZZ^d_{\geq 0}$ and ${\rm in}_< (q) = \xb_f$.

\medskip

\noindent
    {\bf Case 1.} ($p$ and $\xb_f$ are relatively prime.)
    Then $S(p,q) = p \xb_g$ is divided by $p \in  \mathcal{G}_2$.
    
\medskip

\noindent
    {\bf Case 2.} ($\xb_f$ is divided by $x_S$.)
    From Lemma \ref{belongs to M}, $\xb_f$ is not divided by $x_T$ since $q \notin M_G$.
    Then $S(p,q) = x_T \xb_g$. Let $i \in S \cap T$.
    Since $\xb_f$ is divided by $x_S$, $\pi(\xb_f)=\pi(\xb_g)$ is divided by $t_i$.
    Thus $\pi(x_T \xb_g) $ is divided by $t_i^2$.
    Hence $x_T \xb_g$ is divided by a monomial $x_{S'} x_T$ such that $i \in S' \cap T$.
    Then the remainder of $S(p,q) =  x_T \xb_g$ on division by $\Gc_2$ is $0$.

\medskip

    Therefore the remainder of any $S$-polynomial on division by $\Gc$ is $0$.
    By Buchberger's Criterion, $\Gc$ is a Gr\"obner basis of $K_G$.
    Since $\Gc_1$ is reduced, it is easy to see that $\Gc$ is reduced.
\end{proof}

Similarly to $\langle [I_G]_2 \rangle$ and $J_G$, we can determine if two $k$-colorings of $G$ are Kempe equivalent by $K_G$.

\begin{Proposition} \label{KG and Kempe}
    Let $G$ be a graph on $[d]$ and let $f$ and $g$ be $k$-colorings of an induced subgraph of $G$.
%
Then $f \sim_k g$ if and only if $\xb_f-\xb_g \in K_G$.
\end{Proposition}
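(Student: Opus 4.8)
The plan is to reduce this to Theorem~\ref{ExamKempe} by showing that adding $M_G$ to $J_G$ does not change membership for binomials of the form $\xb_f - \xb_g$ where $f$ and $g$ are $k$-colorings of induced subgraphs of $G$. Since $J_G \subset K_G$, the forward direction $f \sim_k g \Rightarrow \xb_f - \xb_g \in J_G \subset K_G$ is immediate from Theorem~\ref{ExamKempe}. So the real content is the converse: assuming $\xb_f - \xb_g \in K_G$, I want to conclude $\xb_f - \xb_g \in J_G$, and then apply Theorem~\ref{ExamKempe} again.

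For the converse, I would argue as follows. Fix a monomial order $<$ on $R[G]$ and let $\Gc_1$ be the reduced Gr\"obner basis of $J_G$. By Proposition~\ref{kg GB}, $\Gc = (\Gc_1 \setminus M_G) \cup \Gc_2$ is the reduced Gr\"obner basis of $K_G$, where $\Gc_2 = \{x_S x_T \mid S,T \in S(G),\ S\cap T \neq \emptyset\}$. Now suppose $\xb_f - \xb_g \in K_G$; I compute the normal form of $\xb_f - \xb_g$ with respect to $\Gc$, which must be $0$ by Lemma~\ref{IMP}. The key observation is that the reduction process can be organized so that it never invokes an element of $\Gc_2$: since $f$ and $g$ are $k$-colorings of induced subgraphs of $G$, both $\xb_f$ and $\xb_g$ lie outside $M_G$ by Lemma~\ref{not in M}; moreover any binomial in $I_G$ with one term divisible by a monomial of $M_G$ has both terms in $M_G$ by Lemma~\ref{belongs to M}. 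Since every element of $\Gc_1 \setminus M_G$ is a binomial in $J_G \subset I_G$ with neither term in $M_G$ (its initial term is not in $M_G$ by construction, hence by Lemma~\ref{belongs to M} neither is the other), reducing $\xb_f - \xb_g$ by elements of $\Gc_1 \setminus M_G$ produces binomials whose monomials all stay outside $M_G$, so no monomial of $\Gc_2$ ever divides an intermediate monomial, and no reduction by $\Gc_2$ ever occurs. Therefore the normal form of $\xb_f - \xb_g$ with respect to $\Gc$ equals its normal form with respect to $\Gc_1$, which is $0$; hence $\xb_f - \xb_g \in J_G$.

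The step I expect to be the main obstacle is the bookkeeping claim that the reduction never touches $\Gc_2$: one must verify that every monomial appearing during the reduction of $\xb_f - \xb_g$ by $\Gc_1 \setminus M_G$ stays outside $M_G$. This follows because each intermediate difference is a binomial in $I_G$ obtained by subtracting a multiple of a binomial $\xb_{f_r} - \xb_{g_r} \in J_G$ (with $\xb_{f_r}, \xb_{g_r} \notin M_G$), so every monomial produced has the form $\xb_h$ for some $k$-coloring $h$ of an induced subgraph of $G$, which by Lemma~\ref{not in M} is not in $M_G$; more directly, if some intermediate monomial were in $M_G$, then since it differs from $\xb_f$ by an element of $I_G$, Lemma~\ref{belongs to M} would force $\xb_f \in M_G$, contradicting Lemma~\ref{not in M}. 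Once this is in hand, the conclusion is automatic, and Theorem~\ref{ExamKempe} finishes the proof.
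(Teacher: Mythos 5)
Your proof is correct and takes essentially the same route as the paper: both reduce to Theorem \ref{ExamKempe} by combining the Gr\"obner basis decomposition of Proposition \ref{kg GB} with Lemmas \ref{not in M} and \ref{belongs to M} to show that dividing $\xb_f-\xb_g$ by the reduced Gr\"obner basis of $K_G$ never invokes the monomials of $\Gc_2$, so membership in $K_G$ forces membership in $J_G$. The only cosmetic difference is that the paper reduces the two monomials $\xb_f$ and $\xb_g$ separately and compares their remainders, while you track the reduction of the binomial directly.
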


\begin{proof}
Let $f$ and $g$ be $k$-colorings of an induced subgraph $G_0$ of $G$.
From Theorem \ref{ExamKempe} it is enough to show that $\xb_f-\xb_g \in J_G$ if and only if $\xb_f-\xb_g \in K_G$.
Since $J_G \subset K_G$, $\xb_f-\xb_g \in K_G$ if $\xb_f-\xb_g \in J_G$.

Suppose that $\xb_f-\xb_g \in K_G$. We show that $\xb_f-\xb_g \in J_G$.
Let $\mathcal{G}$ be the reduced Gr\"obner basis of $K_G$ with respect to a monomial order $<$.
    From Proposition~\ref{kg GB},
    $\mathcal{G} = (\mathcal{G}_1 \setminus M_G)\cup \mathcal{G}_2$ where $\mathcal{G}_1$ is the reduced Gr\"obner basis of $J_G$ with respect to $<$ and $\mathcal{G}_2 = \{ x_S x_T \mid S, T \in S(G),  \ S \cap T \neq \emptyset\}$.
Let $\xb_{f'}$ (resp.~$\xb_{g'}$)
denote the remainder of $\xb_f$ (resp.~$\xb_g$)
on division by $\mathcal{G}_1 \setminus M_G$.
Suppose that $\xb_{f'} \ne \xb_{g'}$.
Since $f$ and $g$ are $k$-colorings of $G_0$, we have $\xb_f, \xb_g \notin M_G$
from Lemma \ref{not in M}.
From Lemma~\ref{belongs to M}, $\xb_{f'}, \xb_{g'} \notin M_G$
since both $\xb_f-\xb_{f'}$ and $\xb_g-\xb_{g'}$ belong to $J_G$.
Thus $\xb_{f'} - \xb_{g'} $ ($\ne 0$)
is the normal form of $\xb_f- \xb_g \in K_G$ with respect to $\mathcal{G}$.
This contradicts that 
 $\mathcal{G}$ is 
 a Gr\"obner basis of $K_G$.
Thus $\xb_{f'} = \xb_{g'}$.
Then 
the normal form of $\xb_f-\xb_g$ with respect to $\mathcal{G}_1 \setminus M_G$ is zero,
and hence $\xb_f-\xb_g \in J_G$.
\end{proof}

Now, we give a proof of Theorem \ref{thm:K-class}.
In fact, Theorem \ref{thm:K-class} follows from the following.

\begin{Theorem} \label{crs}
  Let $G$ be a graph on $[d]$ and $<$ a monomial order on $R[G]$, and let $\{\xb_{f_1},\dots,\xb_{f_s}\}$ be the set of all standard monomials of degree $k$ with respect to ${\rm in}_<(K_G)$.
    Then each $f_i$ is a $k$-coloring of an induced subgraph of $G$.
    In addition, given an induced subgraph $G'$ of $G$,
\[
\{f_1, \dots, f_s\} \cap \mathcal{C}_k(G')
\]
is a complete representative system
    for ${\rm kc}(G',k)$.
\end{Theorem}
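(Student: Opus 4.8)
The plan is to prove the two assertions of Theorem~\ref{crs} in turn, relying on Lemma~\ref{not in M}, Lemma~\ref{belongs to M}, Proposition~\ref{kg GB}, and Proposition~\ref{KG and Kempe}. For the first assertion, suppose $\xb_{f_i}$ is a standard monomial of degree $k$ with respect to ${\rm in}_<(K_G)$. Since $M_G \subset K_G$, every monomial generator $x_S x_T$ of $M_G$ with $S\cap T\neq\emptyset$ lies in ${\rm in}_<(K_G)$, so $\xb_{f_i}\notin M_G$. By Lemma~\ref{not in M}, $f_i$ is then a $k$-coloring of an induced subgraph of $G$, which is exactly what is claimed.

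For the second assertion, fix an induced subgraph $G'$ of $G$ and consider the set $\mathcal{A} := \{f_1,\dots,f_s\}\cap\mathcal{C}_k(G')$. I must show (a) every $k$-coloring of $G'$ is Kempe equivalent to some element of $\mathcal{A}$, and (b) distinct elements of $\mathcal{A}$ are not Kempe equivalent. For (a): let $f$ be any $k$-coloring of $G'$. Since $G'$ is an induced subgraph of $G$, $f$ is also a $k$-coloring of an induced subgraph of $G$, hence $\xb_f\notin M_G$ by Lemma~\ref{not in M}. Let $\xb_{f'}$ be the normal form of $\xb_f$ with respect to the reduced Gr\"obner basis $\mathcal{G}=(\mathcal{G}_1\setminus M_G)\cup\mathcal{G}_2$ of $K_G$ from Proposition~\ref{kg GB}. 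Reduction by elements of $\mathcal{G}_2$ cannot occur (it would put $\xb_f$ in $M_G$), so $\xb_{f'}$ is obtained from $\xb_f$ by reducing only with elements of $\mathcal{G}_1\setminus M_G\subset J_G$; hence $\xb_f-\xb_{f'}\in J_G$, and since $\xb_f\notin M_G$, Lemma~\ref{belongs to M} gives $\xb_{f'}\notin M_G$. Therefore $\xb_{f'}$ is a standard monomial of degree $k$ with respect to ${\rm in}_<(K_G)$, so $\xb_{f'}=\xb_{f_j}$ for some $j$, and $f_j$ is a $k$-coloring of an induced subgraph of $G$. Moreover $\xb_f-\xb_{f_j}\in J_G$, so by Theorem~\ref{ExamKempe}, $f\sim_k f_j$ as colorings of that induced subgraph — and since $f$ uses only colors that appear, the switchings stay within $G'$, so in fact $f_j\in\mathcal{C}_k(G')$, i.e.\ $f_j\in\mathcal{A}$ and $f\sim_k f_j$. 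The point to be careful about here is that $\xb_f-\xb_{f_j}\in J_G$ really does force the Kempe equivalence to happen \emph{inside} $G'$: this follows because $J_G$ consists of binomials $\xb_u-\xb_v$ with $u,v$ colorings of a common graph, and the vertex set supporting the colorings $f$ and $f_j$ is contained in $V(G')$.

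For (b): suppose $f_a, f_b\in\mathcal{A}$ with $f_a\sim_k f_b$. Both are $k$-colorings of induced subgraphs of $G$ (in fact of $G'$), so by Proposition~\ref{KG and Kempe}, $\xb_{f_a}-\xb_{f_b}\in K_G$. But $\xb_{f_a}$ and $\xb_{f_b}$ are both standard monomials with respect to ${\rm in}_<(K_G)$, hence both equal their own normal form with respect to $\mathcal{G}$; since $K_G$ has $\mathcal{G}$ as a Gr\"obner basis, the normal form of $\xb_{f_a}-\xb_{f_b}\in K_G$ is $0$, forcing $\xb_{f_a}=\xb_{f_b}$, i.e.\ $f_a$ and $f_b$ coincide up to permutation of colors, which we have agreed to identify. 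Combining (a) and (b), $\mathcal{A}$ is a complete representative system for ${\rm kc}(G',k)$. Taking $G'=G$ yields Theorem~\ref{thm:K-class}. I expect the main obstacle to be the bookkeeping in (a) — specifically, verifying that the normal-form reduction of $\xb_f$ never uses a generator from $\mathcal{G}_2$ and that the resulting Kempe equivalence is realized by switchings confined to $G'$ — whereas (b) is a direct application of the Gr\"obner basis property once Proposition~\ref{KG and Kempe} is in hand.
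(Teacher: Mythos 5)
Your proposal is correct and follows essentially the same route as the paper: the same decomposition $\mathcal{G}=(\mathcal{G}_1\setminus M_G)\cup\mathcal{G}_2$ from Proposition~\ref{kg GB}, the same use of Lemmas~\ref{not in M} and~\ref{belongs to M} to show the normal form of $\xb_f$ is computed entirely within $\mathcal{G}_1\setminus M_G$ (hence $\xb_f-\xb_{f_j}\in J_G$), and the same standard-monomial contradiction for distinctness. Your extra remark that $\xb_f-\xb_{f_j}\in J_G\subset I_G$ forces $f_j$ to be a coloring of the same vertex set, hence $f_j\in\mathcal{C}_k(G')$, is a point the paper leaves implicit, but it does not change the argument.
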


\begin{Remark}
    If $\xb_f=x_{S_1} x_{S_2} \cdots x_{S_k}$ is a standard monomial of degree $k$ with respect to ${\rm in}_< (K_G)$, then $f$ is a $k$-coloring of the induced subgraph of $G$ on $\bigcup_{i=1}^{k} S_i$.
\end{Remark}

\begin{proof}[Proof of Theorem \ref{crs}]
Let $\mathcal{G}$ be the reduced Gr\"obner basis of $K_G$ with respect to a monomial order $<$.
    From Proposition~\ref{kg GB},
    $\mathcal{G} = (\mathcal{G}_1 \setminus M_G)\cup \mathcal{G}_2$ where $\mathcal{G}_1$ is the reduced Gr\"obner basis of $J_G$ with respect to $<$ and $\mathcal{G}_2 = \{ x_S x_T \mid S, T \in S(G),  \ S \cap T \neq \emptyset\}$.
Since each $\xb_{f_i}$ is not divided by any monomial in $\mathcal{G}_2$, 
from Lemma~\ref{not in M},
each $f_i$ is a $k$-coloring of an induced subgraph of $G$.


Suppose that $f_i \sim_k f_j$ for some $1 \le i < j \le s$.
From Theorem \ref{ExamKempe}, $\xb_{f_i} - \xb_{f_j}$ belongs to $J_G (\subset K_G)$.
Then ${\rm in}_< (\xb_{f_i} - \xb_{f_j})$ is not standard, a contradiction.
Hence $f_i \not\sim_k f_j$ for any $1 \le i < j \le s$.

Let $f$ be a $k$-coloring of $G'$, and let $\xb_{f'}$ be the remainder of $\xb_f$ on division by $\mathcal{G}_1 \setminus M_G$.
Then $\xb_f - \xb_{f'}$ belongs to $J_G$.
Since $\xb_f \notin M_G$, we have $\xb_{f'} \notin M_G$
from Lemma~\ref{belongs to M}.
Thus $\xb_{f'}$ is the normal form of $\xb_f$ with respect to $\mathcal{G}$, and hence
$\xb_{f'} = \xb_{f_j}$ for some $j$.
Since $\xb_f - \xb_{f_j}$ belongs to $J_G$, 
we have $f \sim_k f_j$ from Theorem \ref{ExamKempe}. 
\end{proof}
We see an example of Theorem \ref{crs}.
\begin{Example}
    {\rm 
    Let $G$ be the graph as in Example \ref{ex:nonkempe}.
We consider the reverse lexicographic order $<$ on $R[G]$ such that
\[
x_{\emptyset} < x_{\{1\}} < \cdots < x_{\{6\}} < x_{\{1,5\}} < x_{\{1,6\}} < x_{\{2,4\}} < x_{\{2,6\}} < x_{\{3,4\}} < x_{\{3,5\}}.
\]
Then there are $65$ standard monomials of degree $3$ with respect to ${\rm in}_<(K_G)$. In particular, the standard monomials $x_{\{1,5\}} x_{\{2,6\}} x_{\{3,4\}}$ and $x_{\{1,6\}} x_{\{2,4\}} x_{\{3,5\}}$ correspond to $3$-colorings of $G$.
It then follows from Theorem \ref{crs} that the associated $3$-colorings, which are the $3$-colorings as in Example~\ref{ex:nonkempe}, form a complete representative system for ${\rm kc}(G,k)$.
}
\end{Example}

As a consequence of Theorem \ref{crs}, the number of $k$-Kempe classes ${\rm Kc}(G,k)$ can be computed by using Hilbert functions.
We denote ${\rm Ind}(G)$ the set of all induced subgraphs of $G$ and denote ${\rm Ind}_m (G)$  the set of all induced subgraphs of $G$ with $m$ vertices.
\begin{Corollary}
\label{cor:K_class}
Let $G$ be a graph on $[d]$.
Then one has
\[
H(R[G]/K_G, k)=\sum_{G' \in {\rm Ind}(G)} {\rm Kc}(G',k).
\]
In particular, 
\[
{\rm Kc}(G,k)= \sum_{m=0}^d (-1)^{d-m} \sum_{G' \in {\rm Ind}_m (G)} H(R[G']/K_{G'},k).
\]
\end{Corollary}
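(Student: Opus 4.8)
The plan is to prove the first identity directly from Theorem~\ref{crs} and then derive the inversion formula by a standard M\"obius-type argument on the Boolean lattice of subsets of $[d]$. For the first identity, I would fix a monomial order $<$ on $R[G]$ and recall from the Lemma of \cite{HHO} quoted in Section~\ref{sect:GB} that $H(R[G]/K_G,k)=|\mathcal{B}_k|$, where $\mathcal{B}_k=\{\xb_{f_1},\dots,\xb_{f_s}\}$ is the set of standard monomials of degree $k$ with respect to ${\rm in}_<(K_G)$. By Theorem~\ref{crs}, each $f_i$ is a $k$-coloring of an induced subgraph of $G$; moreover, if $\xb_{f_i}=x_{S_1}\cdots x_{S_k}$, then (as noted in the Remark) $f_i$ is in fact a $k$-coloring of the specific induced subgraph $G[\bigcup_{\ell} S_\ell]$, and this subgraph is uniquely determined by $\xb_{f_i}$. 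So the plan is to partition $\mathcal{B}_k$ according to which induced subgraph $G'$ the corresponding coloring "lives on", i.e.\ set $\mathcal{B}_k(G')=\{\xb_{f_i}\in\mathcal{B}_k : f_i\in\mathcal{C}_k(G')\text{ and }\bigcup_\ell S_\ell = V(G')\}$. Then $\mathcal{B}_k=\bigsqcup_{G'\in{\rm Ind}(G)}\mathcal{B}_k(G')$ is a disjoint union, and by the "In addition" part of Theorem~\ref{crs}, $\{f_i : \xb_{f_i}\in\mathcal{B}_k(G')\}$ is a complete representative system for ${\rm kc}(G',k)$, so $|\mathcal{B}_k(G')|={\rm Kc}(G',k)$. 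Summing gives $H(R[G]/K_G,k)=\sum_{G'\in{\rm Ind}(G)}{\rm Kc}(G',k)$.

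There is one subtlety to handle carefully: Theorem~\ref{crs} states that $\{f_1,\dots,f_s\}\cap\mathcal{C}_k(G')$ is a complete representative system for ${\rm kc}(G',k)$, but a given $f_i$ which lives on $G[\bigcup S_\ell]$ may also be a valid $k$-coloring of larger induced subgraphs $G'\supseteq G[\bigcup S_\ell]$ (e.g.\ a coloring that simply assigns no new color class to the extra vertices — but wait, in our encoding every color class is used, so actually such an $f_i$ need not be a $k$-coloring of $G'$ in the sense required; one should check that $\{f_1,\dots,f_s\}\cap\mathcal{C}_k(G')$ consists exactly of those $f_i$ with $\bigcup_\ell S_\ell = V(G')$). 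The point is that a $k$-coloring of $G'$ in this paper's convention uses all $k$ colors and colors \emph{all} vertices of $G'$, so $f_i\in\mathcal{C}_k(G')$ forces $\bigcup_\ell S_\ell=V(G')$. This makes the partition well-defined and the count exact; I expect this bookkeeping to be the main (minor) obstacle, and it is resolved purely by unwinding definitions.

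For the inversion formula, the plan is to apply M\"obius inversion on the poset $\mathrm{Ind}(G)$ ordered by inclusion of vertex sets, which is isomorphic to an order ideal of the Boolean lattice $2^{[d]}$ (in fact to all of $2^{[d]}$ if we allow all vertex subsets, since every induced subgraph of $G$ is $G[W]$ for some $W\subseteq[d]$). Applying the first identity to the graph $G[W]$ for each $W\subseteq[d]$ gives $H(R[G[W]]/K_{G[W]},k)=\sum_{W'\subseteq W}{\rm Kc}(G[W'],k)$, since $\mathrm{Ind}(G[W])=\{G[W']:W'\subseteq W\}$. This is exactly the hypothesis of M\"obius inversion over the Boolean lattice with M\"obius function $\mu(W',W)=(-1)^{|W|-|W'|}$, so ${\rm Kc}(G[W],k)=\sum_{W'\subseteq W}(-1)^{|W|-|W'|}H(R[G[W']]/K_{G[W']},k)$. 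Taking $W=[d]$ and grouping the sum by $m=|W'|$ yields ${\rm Kc}(G,k)=\sum_{m=0}^d(-1)^{d-m}\sum_{G'\in{\rm Ind}_m(G)}H(R[G']/K_{G'},k)$, as claimed. No genuine obstacle remains here; it is a routine application of inclusion-exclusion once the first identity is in hand.
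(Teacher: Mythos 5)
Your proposal is correct and fills in exactly the reasoning the paper leaves implicit: the paper states this corollary without proof as an immediate consequence of Theorem~\ref{crs}, and your partition of the standard monomials according to the induced subgraph $G[\bigcup_\ell S_\ell]$ on which each coloring lives, followed by M\"obius inversion over the Boolean lattice, is the intended argument. The only slight wobble is the parenthetical claim that ``every color class is used'' (false in general, since empty stable sets may occur as color classes); the correct and sufficient point, which you do reach, is that a $k$-coloring of $G'$ has domain exactly $V(G')$, so each $f_i$ lies in $\mathcal{C}_k(G')$ for precisely one $G'\in{\rm Ind}(G)$.
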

\begin{Example}
    {
    Let $G$ be the graph as in Example \ref{ex:nonkempe}.
    Then one has 
    \[
    H(R[G]/K_G, k)=
    \left\{
    \begin{array}{cc}
1 & k=0,\\
13 & k=1,\\
49 & k=2,\\
65 & k=3,\\
64  & k\ge 4,
    \end{array}
    \right.
    \]
    and 
    \[
    H(R[G']/K_{G'},3)=32
    \]
    for any $G'\in {\rm Ind}_5(G)$.
    Note that ${\rm Kc}(G',k) \ge 1$ for any $k \ge \chi(G)$ and $G' \in {\rm Ind}(G)$. Moreover, one has $|{\rm Ind}(G)| = 2^6 = 64$. Hence we have the following
    from $H(R[G]/K_{G},k)$.
    \begin{itemize}
    \item 
    Since $H(R[G]/K_{G},2) < 2^6$, one has $\chi(G) \ge 3$.
    In fact, $\chi(G) = 3$ in this case.
    \item
    Let $k \ge 4$.
    Since $H(R[G]/K_{G},k) = 2^6$,
    ${\rm Kc}(G',k)=1$ for any $G' \in {\rm Ind}(G)$.
    \item
    Since $H(R[G']/K_{G'},3)=2^5$, ${\rm Kc}(G'',3)=1$ for any $G'' \in {\rm Ind}(G)$ with $G \ne G''$.
    Hence one has ${\rm Kc}(G,3)=2$ from $H(R[G]/K_{G},3) = 65$.
    \end{itemize}
    }
\end{Example}
Let $I$ be a graded ideal of $R$. 
In general, the Hilbert function $H(R/I,k)$ is not always a polynomial. 
However, there exists a unique polynomial $P_{R/I} \in \QQ[k]$ such that $H(R/I,k)=P_{R/I}(k)$ for $k$ large enough. We call $P_{R/I}$ the \textit{Hilbert polynomial} of $R/I$.
We show that $P_{R[G]/K_G}$ is a constant which depends only on the number of vertices.
\begin{Proposition}
    Let $G$ be a graph on $[d]$.
    Then one has
    \[
    P_{R[G]/K_G}(k)=2^d.
    \]
\end{Proposition}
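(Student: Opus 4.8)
The plan is to combine Corollary \ref{cor:K_class} with the fact that for every graph $G'$ and every sufficiently large $k$, one has ${\rm Kc}(G',k)=1$. The latter fact is exactly what Proposition \ref{prop:quad} gives after one notes that complete multipartite graphs have quadratically generated toric ideals, but it is cleaner to argue directly: for $k$ large, any two $k$-colorings of $G'$ are Kempe equivalent. Indeed, once $k$ exceeds the number of vertices of $G'$, a $k$-coloring can use an unused color as a ``free slot,'' and one can shuffle colors one vertex at a time via Kempe switchings on the unused color class together with a singleton class; this is the standard observation that ${\rm Kc}(G',k)=1$ for $k\ge |V(G')|+1$ (or even $k\ge \Delta(G')+2$, but we only need some bound). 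So there is a threshold $k_0$ (e.g. $k_0=d+1$) such that ${\rm Kc}(G',k)=1$ for all $G'\in{\rm Ind}(G)$ and all $k\ge k_0$.

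First I would invoke Corollary \ref{cor:K_class}: $H(R[G]/K_G,k)=\sum_{G'\in{\rm Ind}(G)}{\rm Kc}(G',k)$. Then for $k\ge k_0$ every summand equals $1$, and the number of induced subgraphs of $G$ on $[d]$ is $2^d$ (one for each subset of $[d]$). Hence $H(R[G]/K_G,k)=2^d$ for all $k\ge k_0$. Since the Hilbert function agrees with the Hilbert polynomial $P_{R[G]/K_G}$ for $k$ large enough, and the constant function $2^d$ agrees with $H(R[G]/K_G,k)$ for all large $k$, uniqueness of the Hilbert polynomial forces $P_{R[G]/K_G}(k)=2^d$.

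The one step that needs actual justification is the claim ${\rm Kc}(G',k)=1$ for $k$ large. I expect this to be the main (and only) real obstacle, though it is not hard. The argument: fix a $k$-coloring $f$ of $G'$ with $k\ge |V(G')|+1$; then some color, say $c$, is not used by $f$. Pick any vertex $v$ and let $c'=f(v)$. The induced subgraph on $f^{-1}(c)\cup f^{-1}(c')=f^{-1}(c')$ has the component containing $v$ equal to $\{v\}$ (it is an independent-set-restricted subgraph, but more simply: $f^{-1}(c')$ together with the empty class $f^{-1}(c)$, and we may Kempe-switch the single-vertex component $\{v\}$). Performing this recolors $v$ to $c$. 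Repeating, one can realize an arbitrary reassignment of colors, so any two $k$-colorings are connected; alternatively cite Proposition \ref{prop:quad} together with \cite[Theorem 3.1]{OST} and the fact that every graph is an induced subgraph of some complete multipartite graph — but the direct argument is self-contained. Once this is in place, the rest is bookkeeping: $|{\rm Ind}(G)|=2^d$ and the uniqueness of the Hilbert polynomial finish the proof.
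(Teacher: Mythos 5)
Your proof is correct and follows the same skeleton as the paper's: apply Corollary \ref{cor:K_class}, observe that every summand ${\rm Kc}(G',k)$ eventually equals $1$, and count $|{\rm Ind}(G)|=2^d$. The only divergence is in how the stabilization ${\rm Kc}(G',k)=1$ is justified: the paper simply cites \cite[Corollary 2.5]{Moh}, which gives the sharper threshold $k\ge\Delta(G)+1$, whereas you prove it directly with the cruder bound $k\ge |V(G')|+1$ by using a spare color to perform single-vertex Kempe switchings. Your self-contained argument is fine for this purpose (only ``eventually constant'' is needed), but the phrase ``repeating, one can realize an arbitrary reassignment of colors'' deserves one more sentence: a single-vertex switch only moves a vertex to a currently \emph{unused} color, so the clean way to finish is to first make the coloring injective (repeatedly recoloring a vertex that shares its color with another vertex to a spare color), and then note that any two injective colorings with $k\ge |V(G')|+1$ differ by a permutation of colors, which the paper has already observed preserves Kempe equivalence. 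With that sentence added, your proof is complete and has the minor merit of not depending on the external reference, at the cost of a weaker (but irrelevant here) threshold.
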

\begin{proof}
    Let $\Delta$ be the maximum degree of $G$. Then it follows from \cite[Corollary 2.5]{Moh} that for any integer $k \geq \Delta +1$ and for any induced subgraph $G'$ of $G$, one has ${\rm Kc}(G',k)=1$.
    Hence for any integer $k \geq \Delta +1$, we obtain
\[
H(R[G]/K_G, k)=\sum_{G' \in {\rm Ind}(G)} {\rm Kc}(G',k)
=\sum_{G' \in {\rm Ind}(G)} 1
=|{\rm Ind}(G)|=2^d.
\]
This implies that 
  \[
    P_{R[G]/K_G}(k)=2^d,
    \]
    as desired.
\end{proof}

\section{Algorithms}
\label{sect:algo}
In this section, by using Theorems \ref{thm:K_equiv} and \ref{thm:K-class} and techniques on Gr\"{o}bner bases, we introduce several algorithms related to Kempe equivalence.

First we give Algorithm \ref{compute GB for KG} to compute the reduced Gr\"obner basis of $K_G$.
From Proposition~\ref{I to I_G}, $\Gc_2$ in Algorithm \ref{compute GB for KG} is a Gr\"obner basis of $I_G$.
It then follows that $\Gc_3$ in Algorithm \ref{compute GB for KG} is a system of generators of $J_G$.
\begin{figure}[h]
\begin{algorithm}[H]
    \caption{Computation of the reduced Gr\"obner basis of $K_G$}
    \label{compute GB for KG}
    \begin{algorithmic}[1]   
    \REQUIRE The set of all stable sets of $G$, and a monomial order $<$ on $R[G]$.
    \ENSURE The reduced Gr\"obner basis of $K_G$ with respect to $<$.
    \STATE
    $\Fc := \{x_{S \setminus \{i\}} x_{\{i\}}- x_{S} x_\emptyset \mid i \in S \in S(G), \ |S| \ge 2\}$.
    \STATE
    From $\Fc$, compute the reduced Gr\"obner basis $\Gc_1$ of $L_G$ with respect to a reverse lexicographic order such that $x_S  \geq x_{\emptyset}$ for any $S \in S(G)$.
    \STATE
    $\mathcal{G}_2:=\{ g /x_{\emptyset}^k \mid g \in \mathcal{G}_1, k \in \ZZ_{\geq 0}$, $x_{\emptyset}^k$ divides $g$, $x_{\emptyset}^{k+1}$ does not divide $g \}$.
    \STATE
    $\mathcal{G}_3:=\{ x_{S_1} x_{S_2} - x_{S_3} x_{S_4}  \in \Gc_2 \mid  S_1, S_2, S_3, S_4 \in S(G), \ S_1 \cap S_2 =S_3 \cap S_4 =\emptyset \}$.
    \STATE
    Compute the reduced Gr\"obner basis $\Gc$ of $K_G$ with respect to $<$ from a system of generators
    $\Gc_3 \cup \{x_S x_T \mid S, T \in S(G), S \cap T \ne \emptyset\}$.
    \RETURN
    $\Gc$.
    \end{algorithmic}
\end{algorithm}
\end{figure}
Using the reduced Gr\"obner basis of $K_G$,
Algorithm \ref{one} determines whether $f \sim_k g$ or not.
The correctness of Algorithm \ref{one} is an immediate consequence of Lemma~\ref{IMP}
and Proposition~\ref{KG and Kempe}.
From Theorem~\ref{ExamKempe}, it is possible to replace $K_G$ with either $\langle [I_G]_2\rangle$ or $J_G$
in Algorithm \ref{one}.
\begin{figure}[!t]
\begin{algorithm}[H]
    \caption{Determination of the Kempe equivalence}
    \label{one}
    \begin{algorithmic}[1]   
    \REQUIRE $k$-colorings $f$ and $g$ of $G$, and the reduced Gr\"obner basis $\mathcal{G}$ of $K_G$.
    \ENSURE ``$f \sim_k g$" or ``$f \not\sim_k g$".
    \STATE
Compute the normal form $m$ of $\xb_f - \xb_g$
with respect to $\mathcal{G}$.
\IF {$m=0$} 
\RETURN ``$f  \sim_k g$"
\ELSE 
\RETURN ``$f \not\sim_k g$"
\ENDIF
    \end{algorithmic}
\end{algorithm}
\end{figure}
On the other hand, by the reduced Gr\"obner basis of $K_G$, we can compute the set of all standard monomials of degree $k$ with respect to the initial ideal ${\rm in}_<(K_G)$.
From Theorem~\ref{thm:K-class}, we have Algorithm~\ref{kanzen} to compute a complete representative system for ${\rm kc}(G,k)$.
\begin{figure}[!t]
\begin{algorithm}[H]
    \caption{Computation of a complete representative system for ${\rm kc}(G,k)$}
    \label{kanzen}
    \begin{algorithmic}[1]   
    \REQUIRE The reduced Gr\"obner basis $\mathcal{G}$ of $K_G$.
    \ENSURE A complete representative system for ${\rm kc}(G,k)$.
    \STATE
    From $\Gc$, compute the set $\{\xb_{f_1},\dots,\xb_{f_s}\}$ of all standard monomials of degree $k$ with respect to the initial ideal ${\rm in}_<(K_G)$.
    \STATE
    $C:=\{\}.$
\FOR{$i=1,2,\dots,s$} 
\IF{$\xb_{f_i}=x_{S_1} \cdots x_{S_k}$ satisfies $[d]=S_1 \cup \cdots \cup S_k$}
\STATE
$C:=C \cup \{f_i\}$.
\ENDIF
\ENDFOR
\RETURN $C$
    \end{algorithmic}
\end{algorithm}
\end{figure}
Algorithm~\ref{enumerate} enumerates all elements in a Kempe equivalent class.
The correctness of Algorithm \ref{enumerate} is guaranteed by 
Theorem \ref{crs} together with the fact that,
with respect to a Gr\"obner basis,
the normal form of the monomials in the same residue class is unique.
In addition, Algorithm \ref{enumerate} is based on \cite[Algorithm 5.7]{Stu}
for enumeration of fibers using a Gr\"obner basis of a toric ideal.
As stated in \cite{Stu}, ``reverse search" technique 
is useful to improve the efficiency.
\begin{figure}[!t]
\begin{algorithm}[H]
    \caption{Enumeration of elements in a Kempe equivalent class}
    \label{enumerate}
    \begin{algorithmic}[1]   
    \REQUIRE $k$-coloring $f$ of $G$, and the reduced Gr\"obner basis $\mathcal{G}$ of $K_G$.
    \ENSURE All $k$-colorings of $G$ which are Kempe equivalent to $f$ (up to permutations of colors).
    \STATE Compute the normal form $\xb_{f'}$ of $\xb_f$ with respect to $\mathcal{G}$.
    \STATE $A:=\{f'\}$, $B:=\{\}$.
    \WHILE{$A\ne \{\}$}
\STATE
    Choose $g \in A$.
    \FOR{$\xb_{g_1} - \xb_{g_2}\in {\mathcal G}$ with $ \xb_{g_1} > \xb_{g_2}$}
\IF{$\xb_{g_2}$ divides $\xb_g$ \AND $g' \notin B$ where $\xb_{g'} = \xb_g \xb_{g_1}/ \xb_{g_2}$} 
\STATE
$A:=A \cup \{g'\}$.
\ENDIF
\ENDFOR
\STATE
$A:= A \setminus \{g\}$, $B:=B \cup \{g\}$.
\ENDWHILE
    \RETURN $B$.
    \end{algorithmic}
\end{algorithm}
\end{figure}
Finally, we give an algorithm to find a sequence of Kempe switchings.
We define a {\it Kempe basis} which is the set of sequences of colorings corresponding to the reduced Gr\"obner basis of $K_G$.

\begin{Definition}
Work with the same notation as in Theorem \ref{kg GB},
that is, $\mathcal{G}=
    (\mathcal{G}_1 \setminus M_G)\cup \mathcal{G}_2
    $
    is the reduced Gr\"obner basis of $K_G$ with respect to $<$.
Let $\mathcal{G}_1 \setminus M_G = \{\xb_{p_1} - \xb_{q_1} , \dots,\xb_{p_t} - \xb_{q_t}\}$.
From Theorem \ref{thm:K_equiv}, $p_j$ and $q_j$ are Kempe equivalent for each $j$.
Then 
\[
\left\{
(f_1^{(1)},\dots, f_{s_1}^{(1)}), 
\dots, 
(f_1^{(t)},\dots, f_{s_t}^{(t)})
\right\}
\]
is called a {\it Kempe basis} of $G$ with respect to $<$
if $f_1^{(j)} = p_j$, $f_{s_j}^{(j)} = q_j$, and $f_{i}^{(j)}$ is obtained from $f_{i-1}^{(j)}$ by a Kempe switching.
\end{Definition}

In order to give an algorithm to find a Kempe basis, the following three Procedures are important.

\bigskip

\noindent
\textbf{Procedure 1.}
($\xb_{f} - \xb_{g} = x_{S_1} x_{S_2} - x_{S_3} x_{S_4} \in J_G
\mapsto $
a sequence of Kempe switching from $f$ to $g$.)

Suppose that 
$f$ and $g$ are 2-colorings of an induced subgraph $G_0$ of $G$.
Let $\xb_{f} - \xb_{g} = x_{S_1} x_{S_2} - x_{S_3} x_{S_4} \in J_G$.
Then $S_1 \cap S_2 = S_3 \cap S_4 = \emptyset$ and $S_1 \cup S_2 = S_3 \cup S_4$.
Hence $g$ is obtained from $f$ by setting
$$
g(x) =
\begin{cases}
    f(x) & x \notin H,\\
    1 & x \in H \mbox{ and } f(x) =2,\\
    2 & x \in H \mbox{ and } f(x) =1,
\end{cases}
$$
where $H$ is the induced subgraph of $G_0$ on the vertex set
$(S_1 \setminus S_3) \sqcup (S_2 \setminus S_4)$.
Suppose that $H$ has $p$ connected components $H_1,\dots, H_p$.
For $l=1,2,\dots,p$, let $f_0 = f$ and let
$$
f_l(x) =
\begin{cases}
    f(x) & x \notin H,\\
    1 & x \in H_1 \cup \dots \cup H_l \mbox{ and } f(x) =2,\\
    2 & x \in H_1 \cup \dots \cup H_l \mbox{ and } f(x) =1.
\end{cases}
$$
Then $f_0,f_1,\ldots,f_p$ is a sequence of $2$-colorings of $G_0$
such that $f_0 = f$, $f_p = g$, and
$f_i$ is obtained from $f_{i-1}$ by a Kempe switching.

\bigskip

\noindent
\textbf{Procedure 2.}
Let $f'$ and $g'$ be $k$-colorings of an induced subgraph $G_1$ of $G$.
Suppose that $\xb_{f'} - \xb_{g'}= {\bf x}_{h} (\xb_{f}- \xb_{g})$.
Then $f$ and $g$ are the restrictions of $f'$ and $g'$ to some induced subgraph $G_2$ of $G_1$, respectively. 
Hence if  $f_0,f_1,\ldots,f_p$ is a sequence of $k'$-colorings of  $G_2$ such that $f_0 = f$, $f_p = g$, and $f_i$ is obtained from $f_{i-1}$ by a Kempe switching, then
 $f_0',f_1',\ldots,f_p'$ satisfy $f_0' = f'$, $f_p' = g'$, and
$f_i'$ is obtained from $f_{i-1}'$ by a Kempe switching, 
where  $f_i'$ is a $k$-coloring of $G_1$ obtained by combining $f_i$ with $h$.

\bigskip

\noindent
\textbf{Procedure 3.}
Let $\xb_{p} - \xb_{q}$ be a binomial in the reduced Gr\"{o}bner basis $\Gc$ of $K_G$ with respect to a monomial order $<$.
From Proposition \ref{kg GB}, any binomial $\xb_p - \xb_q \in \Gc$ is obtained from the reduced Gr\"obner basis of $J_G$
with respect to $<$
which is computed from
\[
\{\xb_f - \xb_g \mid \mbox{$f$ and $g$ are $2$-colorings of an induced subgraph of $G$}\}
\]
by Buchberger's Algorithm.
By keeping track of the computation of Buchberger's Algorithm, we can compute an expression
$$
\xb_{p} - \xb_{q}  = \sum_{r=1}^s {\bf x}_{w_r} (\xb_{f_r}- \xb_{g_r}),
$$
where $f_r$ and $g_r$ are $2$-colorings of an induced subgraph of $G$ for each $r$,
\[
{\bf x}_{w_r} \xb_{g_r} = {\bf x}_{w_{r+1}} \xb_{f_{r+1}}
\]
for each $r=1,\dots,s-1$, and
$\xb_{p} = {\bf x}_{w_1} \xb_{f_1}$, $\xb_{q} = {\bf x}_{w_s} \xb_{g_s}$.
See \cite[Chapter 2, \S 1]{AL} for details.
Then we can compute a sequence of Kempe switchings from 
$u_r$ to $v_r$ where $\xb_{u_r} = \xb_{w_r} \xb_{f_r}$ and $\xb_{v_r} = \xb_{w_r} \xb_{g_r}$ by Procedures 1 and 2.
Combining them, we have a sequence of Kempe switchings from $p$ to $q$.

\bigskip

Using Procedures 1, 2, and 3, we have Algorithm \ref{Kempe basis} that computes a Kempe basis.
Note that, if the reduced Gr\"obner basis of $K_G$ consists of quadratic polynomials,
then we can skip large part of the procedure in Algorithm \ref{Kempe basis}.

\begin{figure}[!t]
\begin{algorithm}[H] 
    \caption{Construction of a Kempe basis}
    \label{Kempe basis}
    \begin{algorithmic}[1]   
    \REQUIRE A monomial order $<$ on $R[G]$.
    \ENSURE A Kempe basis of $G$ with respect to $<$.
\STATE
Compute a system of (quadratic) generators 
\[
\Fc=\{\xb_{f_1}- \xb_{g_1}, \dots, \xb_{f_t}- \xb_{g_t}\} \cup \{x_S x_T \mid S, T \in S(G) , S\cap T \ne \emptyset \}
\]
of $K_G$.
\FOR{$r=1,2,\dots,t$}
\STATE
By Procedure 1, compute a sequence $F_r$ of 2-colorings corresponding to
a sequence of Kempe switchings from $f_r$ to $g_r$. 
\ENDFOR
\STATE
Compute the reduced Gr\"obner basis $\Gc$ of $K_G$ with respect to $<$ from $\Fc$.
\STATE
$A:=\{\}$.
\FOR{$\xb_{p} - \xb_{q} \in \mathcal{G}$}
\STATE
By keeping track of the computation of $\mathcal{G}$ from $\Fc$,
compute the expression
$$
\xb_{p} - \xb_{q}  = \sum_{r=1}^s {\bf x}_{w_r} (\xb_{f_{i_r}}- \xb_{g_{i_r}}),
$$
where 
 ${\bf x}_{w_r} \xb_{g_{i_r}} = {\bf x}_{w_{r+1}} \xb_{f_{i_{r+1}}}$
for each $r=1,\dots,s-1$,
and
$\xb_{p} = {\bf x}_{w_1} \xb_{f_{i_1}}$, $\xb_{q} = {\bf x}_{w_s} \xb_{g_{i_s}}$.
\FOR{$r=1,2,\dots,s$}
\STATE
$f := f_{i_r}'$ where $\xb_{f_{i_{r}}'} = {\bf x}_{w_r} \xb_{f_{i_r}}$,
and $g:= g_{i_r}'$ where $\xb_{g_{i_r}'} = {\bf x}_{w_r} \xb_{g_{i_r}}$.
\STATE
By extending $F_{i_r}$,
find a sequence of colorings $h_1^{(r)}, \dots, h_{t_r}^{(r)}$ where $h_1^{(r)} = f$, $h_{t_r}^{(r)}= g$ and
$h_j^{(r)}$ is obtained from $h_{j-1}^{(r)}$ by a Kempe switching.
\ENDFOR
\STATE
$A:= A \cup \{(h_1^{(1)}, \dots, h_{t_1}^{(1)} (= h_1^{(2)}), h_2^{(2)}, \dots, h_{t_2}^{(2)}
( = h_1^{(3)}), \dots, h_{t_s}^{(s)} )\}$
\ENDFOR
\RETURN
$A$
    \end{algorithmic}
\end{algorithm}
\end{figure}

We now use a Kempe basis to find a sequence of Kempe switchings between two colorings.
Suppose that the normal form $\xb_{f'}$ of $\xb_f$ with respect to the reduced Gr\"obner basis $\mathcal{G}$ of $K_G$
is given by 
$$\xb_f =\xb_{f_0}
\underset{h_1}{\rightarrow}
\xb_{f_1}
\underset{h_2}{\rightarrow}
\cdots
\underset{h_t}{\rightarrow}
\xb_{f_t}=
\xb_{f'}, \ \ \ (h_j \in \mathcal{G}).
$$
By Procedure 2, we can construct a sequence of Kempe switchings from $f_{i-1}$ to $f_i$
by extending a sequence of Kempe switchings corresponding to $h_{i-1}$ in a Kempe basis.
Using this fact, we have Algorithm \ref{alg1} to find a sequence of Kempe switchings.

Although Algorithm \ref{compute GB for KG} requires the enumeration of all stable sets and the computation of a Gr\"obner basis, and thus is not computationally feasible for large graphs, its merit lies in providing a uniform algebraic framework. In particular, the method is mechanically applicable and conceptually clarifies the structure of the graph from an abstract algebraic viewpoint.

\begin{figure}[!t]
\begin{algorithm}[H]
    \caption{Construction of a sequence of Kempe switchings}
    \label{alg1}
    \begin{algorithmic}[1]   
    \REQUIRE $k$-colorings $f$ and $g$ of $G$, the reduced Gr\"obner basis $\mathcal{G}$ of $K_G$,
    and a Kempe basis $\mathcal{K}$ of $G$.
    \ENSURE ``a sequence of Kempe switchings from $f$ to $g$"
    or ``$f \not\sim_k g$".
    \STATE
Compute the normal form $\xb_{f'}$ of $\xb_f$
with respect to $\mathcal{G}$.
    \STATE
Compute the normal form $\xb_{g'}$ of $\xb_g$
with respect to $\mathcal{G}$.
\IF {$\xb_{f'} =\xb_{g'}$} 
\STATE By Procedure 2, from $\mathcal{K}$, find a sequence of colorings $f=f_1,\dots, f_s,f'$,
where $f_i$ is obtained from $f_{i-1}$ by a Kempe switching.
\STATE By Procedure 2, from $\mathcal{K}$, find a sequence of colorings $g=g_1,\dots, g_t , g'$ ($=f'$),
where $g_i$ is obtained from $g_{i-1}$ by a Kempe switching.
\RETURN $f_1,\dots, f_s,f',g_t ,g_{t-1},\dots,g_1$. 
\ELSE 
\RETURN ``$f \not\sim_k g$"
\ENDIF
    \end{algorithmic}
\end{algorithm}
\end{figure}

\subsection*{Acknowledgment}
This work was supported by JSPS KAKENHI 24K00534 and 22K13890.


\begin{thebibliography}{9}



\bibitem{AL}
W. Adams and P. Loustaunau,
``An Introduction to Gr{\"o}bner Bases",
Graduate Studies in Mathematics {\bf 3},
American Mathematical Society, 1994.


\bibitem{CLO}
D. A. Cox, J. Little and D. O'Shea, ``Ideals, Varieties, and Algorithms", fourth edition, Undergraduate Texts in Mathematics,
Springer, 2015.


\bibitem{HHO}
J. Herzog, T. Hibi and H. Ohsugi, \textit{Binomial ideals}, Graduate Text in Mathematics, Springer, 2018.

\bibitem{MOS}
K. Matsuda, H. Ohsugi and K. Shibata,
Toric rings and ideals of stable set polytopes,
{\it Mathematics} {\bf 7} (2019),  613.


\bibitem{Moh}
B. Mohar,
Kempe Equivalence of Colorings,
{\em in} ``Graph Theory in Paris" (A.~Bondy, J.~Fonlupt, JL.~Fouquet, JC.~Fournier, J.~L.~Ram\'irez Alfons\'in, Eds), Trends in Mathematics, 
pp.~287--297,
Birkh\"auser Basel, 2006.

\bibitem{OST}
H. Ohsugi, K. Shibata and A. Tsuchiya,
Perfectly contractile graphs and quadratic toric rings,
{\em Bull. Lond. Math. Soc.} {\bf 55} (2023), 1264--1274.

\bibitem{OTkempe}
H. Ohsugi and A. Tsuchiya,
Kempe equivalence and quadratic toric rings, preprint, \\
{\tt arXiv:2303.12824}


\bibitem{Stu}
B. Sturmfels, 
``Gr\"{o}bner bases and convex polytopes", 
Amer. Math. Soc., Providence, RI, 1996. 

\end{thebibliography}
\end{document}